
%

\documentclass[reqno]{amsart}
\usepackage[
    bookmarksopen=true,
    citecolor=blue]{hyperref}
\usepackage{multirow,makecell}
\usepackage{float}
\usepackage{caption}
\captionsetup[table]{labelsep=space}
\usepackage[all]{xy}
\usepackage{eufrak}
\usepackage{mathrsfs, amscd}
\usepackage{amsmath}
\usepackage{amssymb}

\begin{document}
\newtheorem{theorem}{Theorem}[section]
\newtheorem{claim}{Claim}
\newtheorem{notation}{Notation}[section]
\newtheorem{lemma}{Lemma}[section]
\newtheorem{prop}{Proposition}[section]
\newtheorem{cor}{Corollary}[section]
\newtheorem{definition}{Definition}[section]
\newtheorem{example}{Example}[section]
\newtheorem{remark}{Remark}[section]
\newtheorem{conjecture}{Conjecture}[section]

\newcommand{\g}{\frak g}
\newcommand{\field}{\mathbb{F}}
\newcommand{\gl}{\mathfrak{gl}}
\newcommand{\der}{{\rm der}}
\newcommand{\inder}{{\rm inder}}
\newcommand{\coder}{{\rm coder}}
\newcommand{\D}{\partial}
\newcommand{\Alt}{{\rm Alt}}
\newcommand{\HD}{\mathscr{D}}
\newcommand{\G}{\mathfrak{G}}
\newcommand{\Dn}{\mathfrak{D}}
\newcommand{\incoder}{{\rm incoder}}
\newcommand{\Hom}{{\rm Hom}}
\newcommand{\Aut}{{\rm Aut}}
\newcommand{\Int}{{\rm Int}}
\newcommand{\End}{{\rm End}}
\newcommand{\sgn}{{\rm sgn}}
\newcommand{\Hr}{{\rm H}}
\newcommand{\Zr}{{\rm Z}}
\newcommand{\Br}{{\rm B}}
\newcommand{\F}{\mathbb{F}}
\newcommand{\h}{\frak h}
\newcommand{\he}{{\rm h}_\varepsilon}
\newcommand{\Z}{\mathcal{Z}}
\newcommand{\B}{\mathcal{B}}
\newcommand{\xx}{{\rm x}}
\newcommand{\y}{{\rm y}}
\newcommand{\z}{{\rm z}}
\newcommand{\im}{{\rm Im}}
\newcommand{\Id}{\rm Id}
\newcommand{\Ob}{\rm Ob}
\newcommand{\rC}{\rm Coder}

\let \d=\Delta
\let \t=\otimes
\let \c=\circ
\let \dis =\displaystyle
\let \bb=\mathbb{}
\let \g = \gamma
\let \o = \omega
\let \O = \Omega
\let \ep  = \epsilon
\let \v = \vskip
\let \n = \newline
\let \p = \newpage
\let \cl = \centerline
\let \nd = \noindent

\title[Cohomologies and deformations of coassociative coderivations]{Cohomologies and deformations of coassociative coderivations}


\author[Du]{Lei Du}
\address{School of Mathematical Sciences, Anhui University, Hefei, 230601, China}
\curraddr{}
\email{18715070845@163.com}
\author[Ma]{Yashuang Ma}
\address{School of Mathematical Sciences, Anhui University, Hefei, 230601, China}
\curraddr{}
\email{mayashuangg@163.com}
\author[Xv]{Jiangnan Xv}
\address{School of Mathematical Sciences, Anhui University, Hefei, 230601, China}
\curraddr{}
\email{jnxu1@outlook.com}
\author[Bao]{Yanhong Bao}
\address{School of Mathematical Sciences, Anhui University, Hefei, 230601, China}
\curraddr{}
\email{baoyh@ahu.edu.cn}

\thanks{L. Du is the corresponding author. }
\thanks{Y.-H. Bao was supported by NSFC (Grant No. 11871071)}

\subjclass[2010]{13D10, 16T15}

\keywords{coassociative coalgebras; coderivations; cohomologies;  defomations.}

\date{}

\dedicatory{}

\begin{abstract}
The motivation of this paper is to construct a deformation theory of  coderivations of coassociative coalgebras. We introduce  a notion of a Coder pair, that is, a coassociative coalgebra with a coderivation. Then we define a proper corepresentation of a Coder pair and study the corresponding cohomology. Finally, we show that a Coder pair is rigid, provided that the second cohomology group is trivial and point out that  a deformation of finite order is   extensible to higher order deformations if the obstruction class, which is defined to be in the third cohomology group, is trivial.
\end{abstract}

\maketitle

\section{Introduction}
Among a classical approach to study a mathematical structure via invariants, cohomology theories occupy a significant position as they enable to control deformations problems. Cohomology theories  have been developed with a great success on associative algebras\cite{GS0,GH}, Lie algebras \cite{CE}, group algebras \cite{SS} and conformal algebras\cite{BK,KK}. Algebraic deformation theory is firstly described due to the seminal work of Gerstenhaber \cite{GS1}. After that, the cohomology and obstruction theory of various kinds algebras has been studied widely. For example, Nijenhuis and
Richardson \cite{lie} describe deformations of graded Lie algebras and Balavoine \cite{BA}  studies deformations of any algebra over a quadratic operad.

 Gerstenhaber and Schack \cite{GS,GS4} naturally extend a deformation theory to algebraic
morphisms and, more generally, diagrams. In  \cite{DY2}, one constructs a deformation theory of a coalgebra morphism. Recently,  deformations of algebraic  derivations are devoloped \cite{Das,Tang}. Deformations of derivations are more difficult to describe than that of the algebraic objects themselves. In \cite{Tang}, the Nijenhuis-Richardson bracket is used to prove that the obstruction classes of derivatons of  Lie algebras are cocycles. In the similar direction, Das and Mandal \cite{Das} obtain the similar result in the associative case by using the Gerstenhaber bracket.

The notion of coassociative coalgebras has received the attention of mathematicians as a part of the study of Hopf algebras. Coalgebras are not only the dual to algebras, but they also have many special properties. For example, unlike the case of algebras, ``The Fundamental Theorem on Coalgebras" \cite[Chapter 2]{SW} asserts that every element of a coassociative coalgebra is contained in a finite dimensional subcoalgebra.

On the other hand, coalgebras and their coderivations are useful. In \cite{DE}, the author shows that there exists  the canonical (anti-homomorphic) embedding of $L$ into the Lie algebra of coderivations of the universal enveloping algebra $U(L)$ of $L$ if $L$ be a Lie algebra over a field  of characteristic $p>0$. One \cite{ST} can mention that the Hochschild cohomology of a algebra $A$ is isomorphic to the space of coderivations on the tensor  coalgebra of $A$. In \cite{NY}, one can also show that the space of outer $A_\infty$-coderivations of $P$  is isomorphic to the Hochschild cohomology of $A$, where $P$ is a bimodule resolution of $A$.

These above facts motivate us to construct a cohomology theory of coderivations of coassociative coalgebras that controls deformations of  coderivations. There are two sides of deformations of coderivations  that are different than the algebraic case. First, by reason of having no  a coalgebra version of the Gerstenhaber bracket or the Nijenhuis-Richardson bracket, it differs from the algebraic case to deal with the obstruction class  of coderivations, using instead an elementary, computational approach. Second, as what one described in \cite{DY1}, working with coalgebras seems to be even more
conceptual and transparent than in the algebraic case. Our coderivation deformation theory is element-free, i.e.,  the Hochschild coalgebra cochain is the form $\Hom(M,C^{\otimes n})$, where $C$ is a coalgebra and $M$ is a bicomodule of $C$, but it is never needed to pick elements in $M$ in the  process of our coderivation deformation.

For convenience to describe,  the main object of study of this paper is the notion of a Coder pair, that is, a coassociative coalgebra  equipped with a coderivation. The paper is organized as follows.

In Section \ref{secLiepair}, we define the concept of  a Coder pair, which is a coalgebra with a coderivation and consider the dual of Coder pairs by relating the  category of Coder pairs with  the category of Der pairs. In Section \ref{seccomplex}, we give a proper corepresentation of a Coder pair and construct a complex associated to a Coder pair with coefficients in a corepresentation over it, then prove it is indeed a complex. In Section \ref{secdefomation}, we show that infinitesimal deformations are a 2-cocycles in the complex  of a Coder pair with coefficients in the coadjoint bicomodule (Proposition \ref{z2}), and we deduce that a Coder pair is rigid, provided that the second cohomology group is trivial (Theorem \ref{Thm-rigid}). Finally, the obstruction that extends a 2-cocycle to a deformation of a Coder pair is identified, which is shown to be a $3$-cocycle (Proposition \ref{ext2}). Further, we show that extensions to higher order deformations automatically exist if the corresponding third cohomology group is trivial (Theorem \ref{ext1}).

\vspace{3mm}
In this paper, we work over a field $\field$ of characteristic 0 and unless otherwise specified, linear spaces, linear maps, $\otimes$, $\Hom$, $\End$ are defined over $\field$. We shall sometimes  use Sweedler's notation.

\section{Coder pairs and their duality}\label{secLiepair}
\subsection{Coder piars}
In this subsection, we consider a pair of a coassociative coalgebra $C$ together with a coderivation $\psi_C$. Such a pair $(C, \psi_C)$ of a coassociative coalgebra with a coderivation is called a Coder pair.

A coassociative coalgebra over $\mathbb{F}$ is a pair $(C, \Delta)$  in which $C$ is a vector space over $\mathbb{F}$ and $\Delta \colon C \to C\otimes C$ is a linear map such that (See \cite[Chapter 1]{SW})
\begin{eqnarray}\label{defcoalgebras}
({{\Id}}_C \otimes \Delta) \Delta = (\Delta \otimes {{\Id}}_C) \Delta.
\end{eqnarray}
\\
\indent A $C$-bicomodule\cite{BW} is a vector space $M$ together with a left action map $\rho_ l : M \to C \otimes M$, and a right action map $\rho_r \colon M \to M \otimes C$, such that $({\d} \otimes {{\Id}}_M) \circ \rho_l = ({{\Id}}_C \otimes \rho_l)\circ  \rho_l$, $({{\Id}}_M \otimes \d) \circ \rho_r = (\rho_r \otimes {\Id}_C)\circ  \rho_r$, and $({{\Id}}_C \t \rho_r) \circ \rho_l=(\rho_l \otimes {{\Id}}_C)\circ  \rho_r$.

If $\psi_C\in\End(C)$ satisfies that
\begin{eqnarray}\label{eq-coderiCM}
\Delta \circ \psi_C = (\psi_C\otimes {{\Id}}_C)\circ \Delta + ({{\Id}}_C\otimes \psi_C)\circ \Delta: \ C\rightarrow C\otimes C,
\end{eqnarray}
then $\psi_C$ is a coderivation of $C$. Denote by $\coder(C)$ the set of all coderivations of $C$.

\begin{definition}
A Coder pair is a coassociative coalgebra $C$ with a coderivation $\psi_C$, which is denoted by $(C, \psi_C)$.
\end{definition}

\begin{definition}
Let $(C, \psi_C)$ and $(D, \psi_D)$ be Coder pairs. A morphism $f$ from $(C, \psi_C)$ to $(D, \psi_D)$ is a coalgebra morphism $f:C\rightarrow D$ such that $\psi_D\circ f=f\circ\psi_C$.
\end{definition}
The category of Coder pairs and their morphisms is denoted by $\mathbf{Coder}$.
\begin{example}
 Let $S$ be the set $\{v_m \mid m\in \mathbf{N}\}$ and $V$ be a $\field$-vector  space with basis $S$.  Then  $V$ is a coalgebra with  comultiplication $\Delta$ defined by
\begin{align*}
\Delta(v_m)=\sum_{i=0}^{m} {m\choose i} v_{m-i}\otimes v_{i}, m\in \mathbf{N}.
\end{align*}
The coalgebra is called the divided power coalgebra in \cite[Chapter 1]{DNR}.

Let $\partial: V\rightarrow V$ be a linear map defined by $\partial(v_m)= v_{m+1}$ for any  $v_m\in S$. We have
\begin{align*}
&((\partial\otimes {\Id}_C)\circ\Delta)(v_m)+(({\Id}_C\otimes \partial)\circ\Delta)(v_m)\\
=&(\partial\otimes {\Id}_C)(\sum_{i=0}^{m}{m\choose i}v_{m-i}\otimes v_i)+({\Id}_C\otimes \partial)(\sum_{i=0}^{m}{m\choose i}v_{m-i}\otimes v_i)\\
=&\sum_{i=0}^{m}{m\choose i} v_{m+1-i}\otimes v_i+\sum_{i=0}^{m}{m\choose i} v_{m-i}\otimes v_{i+1}\\
=&\sum_{i=0}^{m+1}{m+1\choose i}v_{m+1-i}\otimes v_{i}\\
=&\Delta(v_{m+1})=(\Delta\circ\partial)(v_m)
\end{align*}
which follows that $\partial$ is a coderivation of $V$, that is, $(V, \partial)$ is a Coder pair. $\Box$
\end{example}

\begin{example}
Let M be a vector space over $\field$. The tensor algebra $T(M)=\oplus_{n\geqslant 0}T^{n}(M)$ has a bialgebra structure, whose coalgebra structure $\Delta$ is defined by  {\rm (See \cite[Chapter 3]{SW})}
\begin{align*}
\Delta(m)=m\otimes 1 +1\otimes m, m\in M.
\end{align*}
 $(T(M),\Delta)$ is a coalgebra whose underlying vector space is the same as that of the tensor algebra.

Let  $r_m \in\End(T(M))$ be a right translation by $m$, that is, $r_m : x\mapsto xm, \text{ for any } x\in T(M)$. Then $(T(M), r_m)$ is a Coder pair. In fact, for any $x\in T(M)$, we have
\begin{align*}
(\Delta\circ r_m) (x)=& \Delta(xm)\\
=&\Delta(x)\Delta(m) \ \ \  (\text{since $T(M)$ is a bialgebra})\\
=&(\Sigma x_{(1)}\otimes x_{(2)})(m\otimes 1+1\otimes m)  \ \ \  (\text{Sweedler's notation})\\
=&\Sigma x_{(1)}m\otimes x_{(2)}+\Sigma x_{(1)}\otimes x_{(2)}m\\
=&((r_m\otimes  1)\circ\Delta)(x)+ ((1\otimes r_m)\circ\Delta)(x),
\end{align*}
which means that $r_m$ is a coderivation.   \ \ \ \ \ \ \ \ \ \ \ \ \ \ \ \ \ \  \  \ \ \  $\Box$
\end{example}

\subsection{Duality of Coder pairs}
Let $(A, u)$ be an associative algebra where $u : A\otimes A\rightarrow A$ is  the multiplication of $A$, i.e., $u(a\otimes b)= a\cdot b$ for $a, b\in A$. A linear map $\varphi_A : A\rightarrow A$ is a derivation if $\varphi_A$ satisfies
\begin{align}\label{eq-derivation}
\varphi_A\circ u=u\circ (\varphi_A \otimes {\Id})+u\circ ({\Id}\otimes \varphi_A),
\end{align}
that is,
\begin{align}\label{eq-derivation}
\varphi_A(a\cdot b)=\varphi_A(a)\cdot b+a\cdot \varphi_A(b), \ \  a,b\in A.
\end{align}

Let $(A, \varphi_A)$ be a Der pair, that is, an associative algebra $A$ with a derivation $\varphi_A$ (See \cite{Das} \cite{Tang}). Then a morphism
$f : (A, \varphi_A) \rightarrow (B, \varphi_B)$ is an algebra morphism $f : A\rightarrow B$ such that $\varphi_B\circ f=f\circ\varphi_A$.
We note the category of Der pairs by {\bf Der} and consider the dual of Coder pairs by relating the  category of Coder pairs with  the category of Der pairs.  To this, we need the lemmas as follows.

\begin{lemma}\label{derivation1}
If $(C,\psi_C)$ is a Coder pair, then $(C^{*},\psi_C^*)$ is a Der pair.
\end{lemma}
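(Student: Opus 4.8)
The plan is to equip $C^*$ with its standard convolution algebra structure and then verify that the transpose $\psi_C^*$ is a derivation by dualizing the coderivation identity \eqref{eq-coderiCM}.

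First I would recall that the linear dual $C^*$ of a coassociative coalgebra carries an associative multiplication $u_{C^*}\colon C^*\otimes C^*\to C^*$ obtained by composing the canonical map $C^*\otimes C^*\to (C\otimes C)^*$ with the transpose $\Delta^*\colon (C\otimes C)^*\to C^*$; in Sweedler's notation this reads $(f\cdot g)(c)=\sum f(c_{(1)})\,g(c_{(2)})$. The associativity of $u_{C^*}$ is precisely the transpose of the coassociativity axiom \eqref{defcoalgebras}, so $(C^*,u_{C^*})$ is an associative algebra; this part is standard and requires no work beyond invoking coassociativity.

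Next, set $\psi_C^*\colon C^*\to C^*$, $\psi_C^*(f)=f\circ\psi_C$. The core of the argument is the identity $\psi_C^*(f\cdot g)=\psi_C^*(f)\cdot g+f\cdot \psi_C^*(g)$ for all $f,g\in C^*$, which I would check by evaluating both sides on an arbitrary $c\in C$. Applying the coderivation law \eqref{eq-coderiCM} to rewrite $\Delta(\psi_C(c))$ as $\sum \psi_C(c_{(1)})\otimes c_{(2)}+\sum c_{(1)}\otimes \psi_C(c_{(2)})$, the left-hand side becomes $\sum f(\psi_C(c_{(1)}))\,g(c_{(2)})+\sum f(c_{(1)})\,g(\psi_C(c_{(2)}))$; recognizing $f\circ\psi_C=\psi_C^*(f)$ and $g\circ\psi_C=\psi_C^*(g)$ then matches this termwise with the right-hand side. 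Hence $\psi_C^*$ satisfies the derivation identity \eqref{eq-derivation}, and together with the first step this shows $(C^*,\psi_C^*)$ is a Der pair.

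The computation itself is short; the only point requiring care — and the closest thing to an obstacle — is the bookkeeping of transposes when $C$ is infinite-dimensional, since the canonical map $C^*\otimes C^*\to (C\otimes C)^*$ is injective but need not be surjective. For the direction proved here this causes no trouble: both the multiplication on $C^*$ and the derivation identity are expressed entirely through pairings with elements of $C$, so the Sweedler computation is valid with no finiteness hypothesis. (The failure of surjectivity would only become relevant for the converse passage from Der pairs back to Coder pairs, which presumably forces the use of a restricted dual there.)
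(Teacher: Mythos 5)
Your proof is correct and follows essentially the same route as the paper: equip $C^*$ with the convolution (dual) algebra structure, then verify the Leibniz rule for $\psi_C^*(f)=f\circ\psi_C$ by evaluating on an arbitrary $c\in C$ and invoking the coderivation identity \eqref{eq-coderiCM} in Sweedler's notation. Your closing remark on the injectivity (but non-surjectivity) of $C^*\otimes C^*\to(C\otimes C)^*$ is a welcome clarification the paper leaves implicit, and it correctly anticipates why the converse direction (Lemma \ref{derivation2}) requires the finite dual $A^0$.
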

\begin{proof}
Suppose that $(C^*, \cdot)$ is an algebra when $(C, \Delta)$ is a coalgebra(See \cite[Chapter 1]{SW}). We only show that $\psi_C^*$ is a derivation when $\psi_C$ is a coderivation. Fix any $f, g\in C^\ast$ and $c \in C$, we have
\begin{eqnarray*}
\psi_C^{*}(f\cdot g)(c)&=& (f\cdot g)(\psi_C(c))
\ = \  \sum f(\psi_C(c)_{(1)})g(\psi_C(c)_{(2)})
\\
&=&(f\otimes g)\left(\sum \psi_C(c)_{(1)}\otimes \psi_C(c)_{(2)}\right)\ =\ (f\otimes g)((\Delta \circ \psi_C)(c))\\
&\stackrel{(\ref{eq-coderiCM})}{=}&(f\otimes g)((\psi_C\otimes {\Id}_C)\circ \Delta + ({\Id}_C\otimes \psi_C)\circ \Delta)(c)
\\
&=&(f\otimes g)\left(\sum \psi_C(c_{(1)})\otimes c_{(2)}+ \sum c_{(1)}\otimes \psi_C(c_{(2)})\right)
\\
&=&\sum f(\psi_C(c_{(1)})g(c_{(2)}) +\sum f(c_{(1)})g(\psi_C(c_{(2)}))
\\
&=&(\psi_C^*(f)\otimes g)\Delta(c)+(f\otimes \psi_C^*(g))\Delta(c)\\
&=&(\psi_C^{*}(f)\cdot g)(c)+(f\cdot \psi_C^{*}(g))(c),
\end{eqnarray*}
which means that $\psi_C^{*}(f\cdot g)=\psi_C^{*}(f)\cdot g+f\cdot\psi_C^{*}(g)$ as required, and hence $\psi_C^\ast$ is a derivation.
\end{proof}
Let $A$ be an associative algebra, the zero dual coalgebra
\begin{align}\label{eq-A0}
A^0 =\{f\in A^*\mid\ker(f)  \text{\ \ contains an ideal of finite codimension}\}
\end{align}
of an algebra is defined in \cite[Chapter 1]{SW}. Compared to obtaining a Der pair from  a Coder pair, it is completely different to get a Coder pair from a Der pair as the following lemma.
\begin{lemma}\label{derivation2}
If $(A, \varphi_A)$ is a Der pair, then $\varphi_A^*(A^0)\subseteq A^0$ and the induced map $\varphi_A^0 : A^0\rightarrow A^0$ is a coderivation; i.e., $(A^0, \varphi_A^0)$ is a Coder pair.
\end{lemma}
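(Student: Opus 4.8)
The plan is to define $\varphi_A^0$ as the restriction to $A^0$ of the transpose $\varphi_A^*\colon A^*\to A^*$, and to obtain both assertions by dualizing the derivation identity (\ref{eq-derivation}). I will use the standard description (\cite[Chapter 1]{SW}) of the finite dual coalgebra: its comultiplication is the restriction $\Delta_{A^0}=u^*|_{A^0}$ of the transpose of the multiplication, and $A^0$ is characterized inside $A^*$ as $A^0=\{f\in A^*\mid u^*(f)\in A^*\otimes A^*\}$, which is a reformulation of the cofinite-ideal condition (\ref{eq-A0}). Taking transposes of $\varphi_A\circ u=u\circ(\varphi_A\otimes{\Id})+u\circ({\Id}\otimes\varphi_A)$ and using $(g\circ h)^*=h^*\circ g^*$, one gets the identity
\begin{align*}
u^*\circ\varphi_A^*=(\varphi_A\otimes{\Id})^*\circ u^*+({\Id}\otimes\varphi_A)^*\circ u^*\colon\ A^*\to (A\otimes A)^*,
\end{align*}
where on $A^*\otimes A^*$ the operators $(\varphi_A\otimes{\Id})^*$ and $({\Id}\otimes\varphi_A)^*$ act as $\varphi_A^*\otimes{\Id}$ and ${\Id}\otimes\varphi_A^*$ respectively.

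For the inclusion $\varphi_A^*(A^0)\subseteq A^0$, fix $f\in A^0$ and write $u^*(f)=\sum_i g_i\otimes h_i$ with $g_i,h_i\in A^*$, which is possible precisely because $f\in A^0$. Applying the identity above gives
\begin{align*}
u^*(\varphi_A^*(f))=\sum_i \varphi_A^*(g_i)\otimes h_i+\sum_i g_i\otimes \varphi_A^*(h_i),
\end{align*}
which lies in $A^*\otimes A^*$; hence $\varphi_A^*(f)\in A^0$ and $\varphi_A^0:=\varphi_A^*|_{A^0}$ is a well-defined endomorphism of $A^0$. This inclusion is the crux of the lemma and the main obstacle: the transpose $\varphi_A^*$ could a priori carry a functional out of the finite dual, and what saves us is exactly that the dualized Leibniz rule keeps $u^*(\varphi_A^*f)$ inside the genuine tensor product $A^*\otimes A^*$ rather than the larger space $(A\otimes A)^*$.

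Finally, because $\varphi_A^0$ preserves $A^0$ and $\Delta_{A^0}=u^*|_{A^0}$ takes values in $A^0\otimes A^0$, restricting the displayed identity to $A^0$ yields
\begin{align*}
\Delta_{A^0}\circ\varphi_A^0=(\varphi_A^0\otimes{\Id})\circ\Delta_{A^0}+({\Id}\otimes\varphi_A^0)\circ\Delta_{A^0},
\end{align*}
which is precisely the coderivation condition (\ref{eq-coderiCM}) for $\varphi_A^0$ on the coalgebra $A^0$; thus $(A^0,\varphi_A^0)$ is a Coder pair. I would also note why a direct argument from (\ref{eq-A0}) is awkward: given a cofinite ideal $I\subseteq\ker f$, the preimage $\varphi_A^{-1}(I)$ need not be an ideal (the Leibniz rule produces a stray term $a\,\varphi_A(b)$), so there is no immediate cofinite ideal inside $\ker(f\circ\varphi_A)$; passing to the dual/tensor formulation is what circumvents this.
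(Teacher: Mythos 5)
Your proof is correct, but it takes a genuinely different route from the paper's. The paper argues directly from the cofinite-ideal definition (\ref{eq-A0}): given $f\in A^0$ and a cofinite ideal $I\subseteq\ker f$, it shows that $I\cap\varphi_A^{-1}(I)$ is again an ideal (for $x\in I\cap\varphi_A^{-1}(I)$ and $a\in A$ one has $\varphi_A(ax)=\varphi_A(a)x+a\varphi_A(x)\in I$, since $x\in I$ and $\varphi_A(x)\in I$ and $I$ is an ideal), that it has finite codimension (because $\varphi_A^{-1}(I)=\ker(\pi\circ\varphi_A)$ has codimension at most $\dim A/I$), and that it lies in $\ker\varphi_A^*(f)$; the coderivation identity is then checked by a separate Sweedler-notation computation using $\Delta=\iota^{-1}\circ u^*$. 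You instead invoke the tensor characterization $A^0=\{f\in A^*\mid u^*(f)\in A^*\otimes A^*\}$ together with $u^*(A^0)\subseteq A^0\otimes A^0$, and dualize the Leibniz rule once; both the stability $\varphi_A^*(A^0)\subseteq A^0$ and the coderivation identity then drop out of the same displayed identity. Your route is shorter and more conceptual, but it outsources a nontrivial standard fact (the equivalence of the two descriptions of the finite dual) to the literature, whereas the paper's argument is self-contained relative to (\ref{eq-A0}).

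One side remark of yours is wrong and should be deleted: you claim that a direct argument from (\ref{eq-A0}) is blocked because $\varphi_A^{-1}(I)$ need not be an ideal, so that ``there is no immediate cofinite ideal inside $\ker(f\circ\varphi_A)$.'' In fact $I\cap\varphi_A^{-1}(I)$ is exactly such an ideal: it is an ideal by the computation above (the stray term $a\varphi_A(x)$ lies in $I$ precisely because $\varphi_A(x)\in I$), it is cofinite as an intersection of two cofinite subspaces, and it is contained in $\varphi_A^{-1}(I)\subseteq\varphi_A^{-1}(\ker f)=\ker(f\circ\varphi_A)$. This is precisely how the paper proves the inclusion, so the direct approach is not merely possible but is the published one.
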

\begin{proof}
First, we prove that $\varphi_A^*(A^0)\subseteq A^0$.
Let $f\in A^0$ and $I$ be an ideal of finite codimension in $A$ which is contained in $\ker f$. Assume that  $\varphi_A^{-1}(I)=\{a\in A\mid \varphi_A(a)\in I\}\subseteq \ker \varphi_A^*(f)$.  We claim that $I\cap \varphi^{-1}_A(I)$ is an  ideal of $A$. In fact, let $x\in I\cap \varphi_A^{-1}(I)$ and $a\in A$, we have
$ax\in I$ and $\varphi_A(ax)=\varphi(a)x+a\varphi_A(x)\in I$, which follow that $ax \in I\cap \varphi_A^{-1}(I)$. Similarly, $xa\in I\cap \varphi_A^{-1}(I)$. It implies that $I\cap \varphi^{-1}_A(I)$ is an  ideal of $A$.
Then we claim that $\dim A/(I\cap \varphi_A^{-1}(I))< \infty$.  In fact,
$\varphi_A^{-1}(I)=\ker (\xymatrix@C=0.5cm{
  A \ar[r]^{\varphi_A} & A \ar[r]^{\pi} &A/I })$,
so $\dim A/\varphi_A^{-1}(I)=\dim \im (\pi\circ \varphi_A)\leqslant \dim A/I <\infty$, which follows that  $\dim A/(I\cap \varphi_A^{-1}(I))< \infty$.
Hence $I\cap \varphi_A^{-1}(I) $ is an ideal of finite codimension in $A$, which is contained in $\ker \varphi_A^*(f)$,  that is, $\varphi_A^*(f) \in A^0$.

Second, we show that $\varphi_A^0 : A^0\rightarrow A^0$ is a coderivation. Let $u:A\otimes A\rightarrow A$ be the multiplication of $A$, then $\Delta :  A^0\rightarrow A^0 \otimes A^0, \Delta =\iota^{-1} \circ u^{\ast}$ is the comultiplication of $A^0$, where $\iota : A^\ast \otimes A^\ast \rightarrow (A\otimes A)^\ast$ is the canonical injiection. Note that $\Delta(a^*)=\Sigma a_{(1)}^*\otimes a_{(2)}^* \text{ for } a^*\in A^0$, then
\begin{align*}
&((\varphi_A^0\otimes {\Id})\circ\d)(a^*)+(({\Id} \otimes \varphi_A^0)\circ\d)(a^*)\\
=&(\varphi_A^0\otimes {\Id})(\Sigma a_{(1)}^*\otimes a_{(2)}^*)+({\Id} \otimes \varphi_A^0)(\Sigma a_{(1)}^*\otimes a_{(2)}^*)\\
=&\Sigma \varphi_A^0(a_{(1)}^*)\otimes a_{(2)}^*+  \Sigma a_{(1)}^*\otimes \varphi_A^0(a_{(2)}^*)\\
=&\Sigma a_{(1)}^*\circ \varphi_A\otimes a_{(2)}^*+  \Sigma a_{(1)}^*\otimes a_{(2)}^*\circ \varphi_A\\
=&\d(a^*)\circ (\varphi_A\otimes {\Id})+ \d(a^*)\circ ({\Id}\otimes \varphi_A)\\
=&(\iota^{-1}\circ u^*)(a^*)\circ (\varphi_A\otimes {\Id})+ (\iota^{-1}\circ u^*)(a^*)\circ ({\Id}\otimes \varphi_A)\\
=&\iota^{-1}(a^*\circ u\circ (\varphi_A\otimes {\Id}))+ \iota^{-1}(a^*\circ u\circ ({\Id}\otimes \varphi_A))\\
\stackrel{(\ref{eq-derivation})}{=}&\iota^{-1}(a^*\circ \varphi_A\circ u)\\
=&(\iota^{-1}\circ u^*\circ\varphi_A^0)(a^*) \\
=&((\d\circ\varphi^0_A))(a^*),
\end{align*}
which follows that $\varphi^0_A$ is a coderivation.
\end{proof}
Combining Lemma \ref{derivation1} and Lemma \ref{derivation2}, we get the following result.
\begin{theorem}
Let $(C, \psi_C)$ be a Coder pair and $(A, \varphi_A)$ be a Der pair, then there is an adjointness result between the functors $(A, \varphi_A)\rightarrow (A^0, \varphi_A^0)$ and $(C, \psi_C)\rightarrow (C^*,\psi_C^*)$, i.e.,
$$\Hom_{\bf Coder}((C, \psi_C), (A^0, \varphi_A^0))\cong\Hom_{\bf Der}((A, \varphi_A), (C^*, \psi_C^*)).$$
\end{theorem}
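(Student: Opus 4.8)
The plan is to upgrade the classical Sweedler duality between coalgebras and algebras to the level of Coder and Der pairs. Recall from \cite{SW} that for any algebra $A$ and any coalgebra $C$ there is a natural bijection
\[
\Phi\colon \Hom_{\mathbf{Alg}}(A, C^*)\ \xrightarrow{\ \cong\ }\ \Hom_{\mathbf{Coalg}}(C, A^0),
\]
witnessing that $(-)^0$ is right adjoint to $(-)^*$. Concretely, $\Phi$ is the transpose with respect to the evaluation pairing: given an algebra morphism $g\colon A\to C^*$, the map $f=\Phi(g)\colon C\to A^0$ is determined by $f(c)(a)=g(a)(c)$ for all $a\in A$, $c\in C$; the content of Sweedler's theorem is precisely that $f(c)$ lands in $A^0$ and that $f$ is a coalgebra morphism, and conversely. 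Thus, since Lemma \ref{derivation1} and Lemma \ref{derivation2} guarantee that both functors are well defined on pairs, it remains only to check that $\Phi$ restricts to a bijection between the two Hom-sets of pairs, i.e. that $\Phi$ and $\Phi^{-1}$ preserve compatibility with the (co)derivations.

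First I would make the decisive observation that, under the pairing $\langle c, a\rangle := f(c)(a) = g(a)(c)$, the two compatibility conditions are literally the transposes of one another. Indeed, $g\colon (A,\varphi_A)\to (C^*,\psi_C^*)$ is a morphism of Der pairs iff $\psi_C^*\circ g = g\circ\varphi_A$; evaluating both sides at $a\in A$ and then at $c\in C$, and using $\psi_C^*(g(a))(c)=g(a)(\psi_C(c))$ together with the pairing, this reads $f(\psi_C(c))(a)=g(\varphi_A(a))(c)$ for all $a,c$. On the other hand $f=\Phi(g)\colon (C,\psi_C)\to(A^0,\varphi_A^0)$ is a morphism of Coder pairs iff $\varphi_A^0\circ f = f\circ\psi_C$; since $\varphi_A^0=\varphi_A^*|_{A^0}$ gives $\varphi_A^0(f(c))=f(c)\circ\varphi_A$, evaluating at $a$ yields $\varphi_A^0(f(c))(a)=f(c)(\varphi_A(a))=g(\varphi_A(a))(c)$. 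Comparing the two, both conditions are equivalent to the single identity $f(\psi_C(c))(a)=g(\varphi_A(a))(c)$ holding for all $a\in A$ and $c\in C$. Hence $g$ respects the derivations precisely when $f=\Phi(g)$ respects the coderivations.

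Combining this with the classical bijection, $\Phi$ restricts to a bijection
\[
\Hom_{\mathbf{Der}}((A,\varphi_A),(C^*,\psi_C^*))\ \xrightarrow{\ \cong\ }\ \Hom_{\mathbf{Coder}}((C,\psi_C),(A^0,\varphi_A^0)),
\]
which is the asserted isomorphism. To present this as an adjunction one should also record that the bijection is natural in both pairs; this follows from the naturality of the classical $\Phi$ together with the fact that the defining relation $f(c)(a)=g(a)(c)$ is manifestly functorial in $A$ and $C$, so that pre- and post-composition with pair morphisms correspond on the two sides.

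The computation in the second paragraph is short and formal, so the main obstacle is not in it but in having the classical duality available in exactly the explicit transpose form used above — in particular the well-definedness $\Phi(g)(c)\in A^0$ and the compatibility of $\Phi$ with the comultiplication, which are the genuinely non-trivial inputs from \cite{SW}. Once these are taken as given, the argument is purely a matter of matching the two linear conditions through the evaluation pairing.
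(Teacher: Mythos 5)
Your proposal is correct and takes essentially the same approach as the paper: both realize the isomorphism as the transpose under the evaluation pairing $f(c)(a)=g(a)(c)$, rely on Lemmas \ref{derivation1} and \ref{derivation2} for well-definedness of the pair structures, and the decisive computation---that the two (co)derivation-compatibility conditions are transposes of one another---is exactly the verification the paper carries out for its explicit maps $F(f)=f^*\circ i_A$ and $G(g)=g^0\circ \phi_C$. The only difference is packaging: you quote the classical Sweedler adjunction for the underlying bijection, whereas the paper re-checks by hand that $F$ and $G$ are mutually inverse.
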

\begin{proof}
By Lemma \ref{derivation1} and Lemma \ref{derivation2}, $(A^0, \varphi_A^0)$ is a Coder pair and $(C^*,\psi_C^*)$ is a Der pair. Then we define the maps
\begin{align*}
&F : \Hom_{\bf Coder}((C, \psi_C), (A^0, \varphi_A^0))\rightarrow \Hom_{\bf Der}((A, \varphi_A), (C^*, \psi_C^*))\\
&G : \Hom_{\bf Der}((A, \varphi_A), (C^*, \psi_C^*))\rightarrow \Hom_{\bf Coder}((C, \psi_C), (A^0, \varphi_A^0))
\end{align*}
by $F(f)=f^*\circ i_A$ for $f : (C, \psi_C)\rightarrow (A^0, \varphi_A^0)$ a morphism of Coder piars and $G(g)=g^0\circ\phi_C$ for
$g : (A, \varphi_A)\rightarrow (C^*, \psi_C^*)$ a morphism of Der piars,  where $i_A : A\rightarrow A^{0*}$ is defined by $i_A(a)(a^*)=a^*(a) \text{ for } a\in A, a^*\in A^0$ and $\phi_C : C\rightarrow C^{*0}$ is defined by $\phi_C(c)(c^*)=c^*(c) \text{ for } c\in C, c^*\in C^0$.

First, we show that $F(f)$ is a morphism of Der piars.
Since $f$ is a morphism of Coder piars, we have $\varphi_A\circ f=f\circ \psi_C$. Then
\begin{align*}
(\psi_C^*\circ f^*\circ i_A)(a)(c)=&i_A(a)((f\circ\psi_C)(c))\\
=&i_A(a)((\psi_A^0\circ f)(c))\\
=&((\varphi_A^0\circ f)(c))(a)\\
=&(f(c))(\varphi_A(a))\\
=&i_A(\varphi_A(a))(f(c))\\
=&(f^*\circ i_A\circ \varphi_A)(a)(c),
\end{align*}
which implies $F(f)=f^*\circ i_A$ is a morphism of Coder piars. Similarly,  $G(g)=g^0\circ\phi_C$ is a morphism of Coder piars.

Second,  we prove that the maps $F$ and $G$ are inverse one to each other. In fact, for $f\in \Hom_{\bf Coder}((C, \psi_C), (A^0, \varphi_A^0)), c\in C$ and $a\in A$, we have
\begin{align*}
&((G\circ F)(f))(c)(a)\\
=&((f^*\circ i_A)^0\circ\phi_C)(c)(a)\\
=&(f^*\circ i_A)^0(\phi_C(c))(a)\\
=&\phi_C(c)((f^*\circ i_A)(a))\\
=&(f^*\circ i_A)(a)(c)\\
=&i_A(a)(f(c))\\
=&f(c)(a)
\end{align*}
which implies $G\circ F= \Id$. Similarly, we have $F\circ G= \Id$.

Hence, we have $$\Hom_{\bf Coder}((C, \psi_C), (A^0, \varphi_A^0))\cong\Hom_{\bf Der}((A, \varphi_A), (C^*, \psi_C^*)).$$
\end{proof}
\begin{remark}
If we restrict these two functors to the finite dimension case, we obtain the duality of categories described as $$\xymatrix@C=0.5cm{(A,\varphi_A)\ar[r]^{\cong}&(A^{0*}, \varphi_A^{0*})}, \xymatrix@C=0.5cm{(C, \psi_C)\ar[r]^{\cong}&(C^{*0}, \psi_C^{*0})}.$$
\end{remark}

\section{Cohomology complexs of  Coder pairs}\label{seccomplex}
In this section, we will give a proper corepresentation of a Coder pair and construct a complex associated to a Coder pair with coefficients in a corepresentation over it. Let $(M, \rho_l, \rho_r)$ be a bicomodule of a coassociative coalgebra $(C,\Delta)$.

\begin{definition}
Let $(C,\psi_C)$ be a Coder pair. A left comodule of $(C,\psi_C)$ consists of a pair $(M,\psi_M)$ where $(M,\rho_l)$ is a left comodule of $C$ and
$\psi_M \in \mathfrak{gl}(M)$ satisfies
\begin{align}\label{eq-leftcomodpair}
\rho_l\circ\psi_M= ({\Id}_C\otimes\psi_M)\circ\rho_l+(\psi_C\otimes {\Id}_M)\circ\rho_l.
\end{align}
\end{definition}

Similarly, a right comodule of $(C,\psi_C)$ is a pair $(M,\psi_M)$ with respect to  $(M,\rho_r)$ is a right comodule of $C$ and
$\psi_M \in \mathfrak{gl}(M)$ satisfies
\begin{align}\label{eq-rightcomodpair}
\rho_r\circ\psi_M= (\psi_M \otimes {\Id}_C)\circ\rho_r+({\Id}_M\otimes \psi_C)\circ\rho_r.
\end{align}

A bicomodule (or corepresentation) of $(C,\psi_C)$ is a pair $(M,\psi_M)$ which is a left comodule and right comodule of $(C,\psi_C)$ and $M$ is a bicomodule of $C$.
Then $(C,\psi_C)$ is a bicomodule over itself, which is called the {\bf coadjoint} bicomodule.

\begin{remark}\label{rmk Ore}
Given an algebra $A$ with a derivation $d: A\rightarrow A$ and a  left $A$-module $N$ with a map $D: N\rightarrow N$ satisfying
\begin{align}\label{eq-derivation pairmodule}
D(an) = d(a)n + aD(n).
\end{align}
The Ore extension $A[D, d]$ is $A[D]$ as an abelian group, with multiplication induced by $D\cdot a= d(a)+ a\cdot D$ for any $a\in A$.
The Eq {\rm (\ref{eq-rightcomodpair})} in the coalgebraic case is the dual of  Eq {\rm (\ref{eq-derivation pairmodule})} in the algebraic case. Thus, given a Coder pair $(C, \psi_C)$ and a right comodule $(M,\psi_M)$ of $(C, \psi_C)$, an interesting question, which makes us puzzled,  is whether or not one can establish a dual comultiplication of an Ore extension in $C[\psi_M]$.             \hspace{1cm}    $\Box$
\end{remark}

Given a Coder pair $(C,\psi_C)$ and its bicomodule $(M,\psi_M)$, we can construct a semi-direct product Coder pair.
\begin{prop}
Assume $(M,\psi_M)$ is a bicomodule of a Coder pair $(C,\psi_C)$. Then $(C\oplus M,\psi_C+\psi_M)$ is a Coder pair where $C\oplus M$ is a coalgebra where the coassociative structure on $C\oplus M$ is given by the semi-direct coproduct
\begin{eqnarray}\label{eq-semi-coproduct}
\Delta_{C\oplus M}(c,m)=\Delta(c)+\rho_l(m)+\rho_r(m), \ \ c\in C, m\in M.
\end{eqnarray}
\end{prop}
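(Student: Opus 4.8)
The plan is to verify two things in turn: first that $(C \oplus M, \Delta_{C \oplus M})$ is a coassociative coalgebra, and second that the endomorphism $\psi_C + \psi_M$, acting as $\psi_C$ on $C$ and as $\psi_M$ on $M$, satisfies the coderivation identity \eqref{eq-coderiCM} for this coalgebra. The organizing observation throughout is that $(C \oplus M)^{\otimes n}$ decomposes as the direct sum of its homogeneous ``words'' in $C$ and $M$; since $\Delta_{C\oplus M}$ restricts to $\Delta$ on $C$ and to $\rho_l + \rho_r$ on $M$, it sends $C$ into $C \otimes C$ and $M$ into $(C \otimes M) \oplus (M \otimes C)$ and \emph{never} into $M \otimes M$. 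Hence every composite we form is automatically multihomogeneous, and each verification reduces to matching components summand by summand.

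For coassociativity, I would expand both $({\Id} \otimes \Delta_{C\oplus M})\circ\Delta_{C\oplus M}$ and $(\Delta_{C\oplus M} \otimes {\Id})\circ\Delta_{C\oplus M}$ applied to a general $(c,m)$, using \eqref{eq-semi-coproduct}. Both sides land in the four summands $C\otimes C\otimes C$, $C\otimes C\otimes M$, $C \otimes M \otimes C$, and $M \otimes C \otimes C$ (no word carries two $M$'s). Comparing components gives four identities: on $C^{\otimes 3}$ it is coassociativity \eqref{defcoalgebras} of $C$; on $C\otimes C \otimes M$ it is the left comodule axiom $(\Delta \otimes {\Id}_M)\circ\rho_l = ({\Id}_C \otimes \rho_l)\circ \rho_l$; on $C \otimes M \otimes C$ it is the bicomodule compatibility $({\Id}_C \otimes \rho_r)\circ\rho_l = (\rho_l \otimes {\Id}_C)\circ \rho_r$; and on $M \otimes C \otimes C$ it is the right comodule axiom $({\Id}_M \otimes \Delta)\circ\rho_r = (\rho_r \otimes {\Id}_C)\circ \rho_r$. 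Thus coassociativity holds precisely because $M$ is a $C$-bicomodule.

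For the coderivation property, I would compute $\Delta_{C\oplus M}\circ(\psi_C + \psi_M)$ on $(c,m)$, obtaining $(\Delta\circ\psi_C)(c) + (\rho_l\circ\psi_M)(m) + (\rho_r\circ\psi_M)(m)$, and then substitute \eqref{eq-coderiCM} into the first summand, the left comodule identity \eqref{eq-leftcomodpair} into the second, and the right comodule identity \eqref{eq-rightcomodpair} into the third; this produces six terms. On the other side I would expand $((\psi_C+\psi_M)\otimes{\Id})\circ\Delta_{C\oplus M} + ({\Id}\otimes(\psi_C+\psi_M))\circ\Delta_{C\oplus M}$, reading off how $\psi_C + \psi_M$ acts in each tensor slot according to whether that slot lies in $C$ or in $M$. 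The six terms so obtained coincide slot by slot with the six terms on the left, which is the desired coderivation identity.

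There is no genuine obstacle here; the computation is routine and the only real task is bookkeeping. The point worth isolating is that the semi-direct coproduct has no $M \to M\otimes M$ component, so none of the triple composites ever calls for a comultiplication on $M$ or an $M$-over-$M$ coaction: each homogeneous piece of the coassociativity check is governed by exactly one of the four structural identities above, and each of the six terms of the coderivation check by exactly one of \eqref{eq-coderiCM}, \eqref{eq-leftcomodpair}, \eqref{eq-rightcomodpair}. The one place where an error could creep in is the ordering of tensor slots, so I would fix at the outset the convention that $\rho_l$ produces the $C$-factor on the left and $\rho_r$ on the right, and track it consistently.
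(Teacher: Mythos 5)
Your proposal is correct and follows essentially the same route as the paper: the paper's proof likewise verifies the coderivation identity for $\psi_C+\psi_M$ by substituting \eqref{eq-coderiCM}, \eqref{eq-leftcomodpair}, and \eqref{eq-rightcomodpair} into $\Delta_{C\oplus M}\circ(\psi_C+\psi_M)$ and regrouping the resulting six terms. The only difference is that the paper dismisses coassociativity of $(C\oplus M,\Delta_{C\oplus M})$ as easy, whereas you spell out the component-by-component check (reducing it to coassociativity of $C$, the two comodule axioms, and the bicomodule compatibility), which is a faithful filling-in of that omitted detail rather than a different argument.
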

\begin{proof}
It is easy that $(C\oplus M, \Delta_{C\oplus M})$ is a coassociative coalgebra. We only need to show that $\psi_C+\psi_M$ is a coderivation as follows.

\begin{eqnarray*}
&&\Delta_{C\oplus M}(\psi_C+\psi_M)(c, m)\\
&=&\Delta_{C\oplus M}(\psi_C(c), \psi_M(m))\\
&\stackrel{(\ref{eq-semi-coproduct})}{=}&\Delta\psi_C(c)+\rho_l\psi_M(m)+\rho_r\psi(m)\\
&\stackrel{(\ref{eq-coderiCM})(\ref{eq-leftcomodpair})(\ref{eq-rightcomodpair})}{=}&\underline{(\psi_C\otimes 1) \Delta(c)} + (1\otimes \psi_C)\Delta(c)+(1\otimes\psi_M)\rho_l(m)+\underline{(\psi_C\otimes 1)\rho_l(m)}\\
&&+\underline{(\psi_M \otimes 1)\rho_r(m)}+(1\otimes \psi_C)\rho_r(m)\\
&=&\underline{((\psi_C+\psi_M)\otimes 1)(\Delta(c)+\rho_l(m)+\rho_r(m))}\\
&&+(1\otimes(\psi_C+\psi_M))(\Delta(c)+\rho_l(m)+\rho_r(m))\\
&=&((\psi_C+\psi_M)\otimes 1)\Delta_{C\oplus M}(c, m)+(1\otimes(\psi_C+\psi_M))\Delta_{C\oplus M}(c, m),
\end{eqnarray*}
where $c, m\in C\oplus M$.
\end{proof}

Next we recall the classical Hochschild cohomology of the coassociative coalgebra $(C, \Delta)$ with coefficients in $(M,\rho_l, \rho_r)$ as follows(\cite{DY1}\cite{DJ}).

 \indent For $n \geq 0$, the $n$-cochain complex is defined to be
   $$
   C_{c}^{n}(M,C) := \Hom(M, C^{\otimes n})
   $$
with the coboundary operator $\delta_c:C_{c}^{n}(M,C)\rightarrow C_{c}^{n+1}(M,C)$
\begin{eqnarray}\label{eq-diffenrial 1}
\delta_{c}(f)
&=&({\Id}_C \otimes f) \circ \rho_{l}  + \sum_{i=1}^{n} (-1)^{i} \left({\Id}_{C^{\otimes(i-1)}} \otimes \Delta \otimes {\Id}_{C^{\otimes(n-i)}}\right) \circ f  \nonumber\\
&&+ (-1)^{n+1} (f \otimes {\Id}_C) \circ \rho_{r}. \label{eq-diffenrial Hoch}
\end{eqnarray}
for $f \in C^{n}_{c}(M,C)$. Thus $(C_c^{\bullet}(M,C),\delta_c^{\bullet})$ is a cochain complex.  Let $\Zr^n(M,C)$ be the set of $n$-cocycles and $\Br^n(M,C)$ be the set of $n$-coboundaries. The corresponding cohomology groups are denoted by $\Hr^n(M,C)=\Zr^n(M,C)/\Br^n(M,C)$ for $n\geqslant0$, which is called
the Hochschild cohomology groups of $C$ with coefficients in M.

Then we construct the cohomology of a Coder pair by using the above Hochschild cohomology for coassociative coalgebras. Let $(M,\psi_M)$ be  a bicomodule of a Coder pair $(C,\psi_C)$.
 We define the set of Coder pair $0$-cochains to be $C_{\rm Coder}^{0}(M,C)=0$, and define the set of Coder pair $1$-cochains to be
 $C_{\rm Coder}^{1}(M,C)=\Hom(M,C)$, for $n\geqslant2$, we define the set of $n$-cochains by
$$C_{\rm Coder}^{n}(M,C)=C_{c}^{n}(M,C)\times C_{c}^{n-1}(M,C).$$
For $n\geqslant 2$, we define an operator $\omega:C_{c}^{n}(M,C)\rightarrow C_{c}^{n}(M,C)$ by
\begin{align}
\label{omega}
 \omega (f_{n})=\sum_{i=1}^{n}({{\Id}}_{C^{\otimes(i-1)}}\otimes \psi_C \otimes {{{\Id}}}_{C^{\otimes(n-i)}} )\circ f_{n}-f_{n}\circ \psi_M.
\end{align}
\indent Define $d_{c}^{1}:C_{\rm Coder}^{1}(M, C)\rightarrow C_{\rm Coder}^{2}(M, C)$ by
\begin{eqnarray}\label{eq-diff1}
d_{c}^1(f_{1})=(\delta_{c}(f_{1}),(-1)^{1}\omega(f_{1})), \ \ f_1\in \Hom(M,C).
\end{eqnarray}
Then for $n\geqslant2$, we define $d_{c}^{n}:C_{\rm Coder}^{n}(M,C)\rightarrow C_{\rm Coder}^{n+1}(M,C)$ by
$$d_{c}^{n}(f_{n},g_{n-1})=(\delta_{c}(f_{n}),\delta_{c}(g_{n-1})+(-1)^{n}\omega (f_{n})).$$
with $f_{n}\in C_{c}^{n}(M,C),~g_{n-1}\in C_{c}^{n-1}(M,C).$

To prove that $d_{c}\circ d_{c}=0$, the following lemma is needed.
\begin{lemma} \label{lemma-omega}
 The map $\delta_c$ and $\o$ are commutative with each other, that is,
\begin{eqnarray}\label{eq-lemma-omega}
\delta_{c}\circ \omega=\omega\circ\delta_{c}.
\end{eqnarray}
\end{lemma}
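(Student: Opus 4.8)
The plan is to prove (\ref{eq-lemma-omega}) by a direct computation, evaluating both composites $\delta_c\circ\omega$ and $\omega\circ\delta_c$ on an arbitrary cochain $f\in C_c^n(M,C)$ and matching them term by term. To keep the bookkeeping transparent I would first abbreviate the total coderivation operator on tensor powers by
\[
\Psi_n=\sum_{i=1}^{n}{\Id}_{C^{\otimes(i-1)}}\otimes\psi_C\otimes{\Id}_{C^{\otimes(n-i)}}\colon C^{\otimes n}\to C^{\otimes n},
\]
so that $\omega(f)=\Psi_n\circ f-f\circ\psi_M$. The operator $\delta_c$ of (\ref{eq-diffenrial Hoch}) splits into a \emph{left} term $({\Id}_C\otimes f)\circ\rho_l$, an \emph{internal} sum of terms $(-1)^i({\Id}_{C^{\otimes(i-1)}}\otimes\Delta\otimes{\Id}_{C^{\otimes(n-i)}})\circ f$, and a \emph{right} term $(-1)^{n+1}(f\otimes{\Id}_C)\circ\rho_r$. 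The strategy is to show that $\omega$ intertwines with each of these three blocks separately, so that the contributions recombine into $\delta_c(\omega(f))$.

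For the internal terms the key algebraic fact I would isolate is the commutation
\[
\Psi_{n+1}\circ({\Id}_{C^{\otimes(i-1)}}\otimes\Delta\otimes{\Id}_{C^{\otimes(n-i)}})=({\Id}_{C^{\otimes(i-1)}}\otimes\Delta\otimes{\Id}_{C^{\otimes(n-i)}})\circ\Psi_n ,
\]
which is exactly the coderivation identity (\ref{eq-coderiCM}) applied to the copy of $\psi_C$ in the $i$-th slot, together with the observation that a copy of $\psi_C$ in any slot $j\neq i$ commutes past the comultiplication and is merely relabelled by the evident index shift. The summand $-f\circ\psi_M$ produces no obstruction here, since $\psi_M$ sits to the right and passes freely through each ${\Id}^{\otimes\bullet}\otimes\Delta\otimes{\Id}^{\otimes\bullet}$.

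The delicate part, which I expect to be the main obstacle, is the two boundary terms, because $\Psi_{n+1}$ carries two extra copies of $\psi_C$ — one in the new first slot created by $\rho_l$ and one in the new last slot created by $\rho_r$ — that $\Psi_n$ does not see. On the left I would compute
\[
\Psi_{n+1}\circ({\Id}_C\otimes f)\circ\rho_l=(\psi_C\otimes f)\circ\rho_l+({\Id}_C\otimes\Psi_n f)\circ\rho_l ,
\]
and then expand $-({\Id}_C\otimes f)\circ\rho_l\circ\psi_M$ using the left comodule compatibility (\ref{eq-leftcomodpair}) in the form $\rho_l\circ\psi_M=({\Id}_C\otimes\psi_M)\circ\rho_l+(\psi_C\otimes{\Id}_M)\circ\rho_l$. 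The two spurious terms $\pm(\psi_C\otimes f)\circ\rho_l$ cancel, leaving precisely $({\Id}_C\otimes\omega(f))\circ\rho_l$. Symmetrically, for the right term,
\[
\Psi_{n+1}\circ(f\otimes{\Id}_C)\circ\rho_r=(\Psi_n f\otimes{\Id}_C)\circ\rho_r+(f\otimes\psi_C)\circ\rho_r ,
\]
and the right comodule compatibility (\ref{eq-rightcomodpair}) cancels the $\pm(f\otimes\psi_C)\circ\rho_r$ terms, leaving $(\omega(f)\otimes{\Id}_C)\circ\rho_r$.

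Summing the three matched blocks yields $\omega(\delta_c(f))=\delta_c(\omega(f))$, which is (\ref{eq-lemma-omega}). The real content is therefore not the coderivation identity for the interior slots but these two boundary cancellations: the extra $\psi_C$ forced into the new outer tensor factor by $\Psi_{n+1}$ is compensated exactly by the interaction of $\psi_M$ with the comodule structure maps through the compatibility conditions (\ref{eq-leftcomodpair}) and (\ref{eq-rightcomodpair}). I would organize the write-up by treating the left, internal, and right blocks in turn, tracking the index shifts and signs carefully, since sign and position errors are the likeliest source of mistakes.
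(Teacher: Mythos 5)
Your proposal is correct and follows essentially the same route as the paper: both expand $\delta_c\circ\omega$ and $\omega\circ\delta_c$ on $f_n\in C_c^n(M,C)$ into left-boundary, internal, and right-boundary blocks, match the internal blocks via the coderivation identity (\ref{eq-coderiCM}) (your commutation $\Psi_{n+1}\circ({\Id}_{C^{\otimes(i-1)}}\otimes\Delta\otimes{\Id}_{C^{\otimes(n-i)}})=({\Id}_{C^{\otimes(i-1)}}\otimes\Delta\otimes{\Id}_{C^{\otimes(n-i)}})\circ\Psi_n$), and cancel the extra copy of $\psi_C$ forced into the new outer slot against the expansion of $\rho_l\circ\psi_M$ and $\rho_r\circ\psi_M$ via (\ref{eq-leftcomodpair}) and (\ref{eq-rightcomodpair}), exactly as the paper does in showing (a1)=(b1) and (a4)=(b4). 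Incidentally, your attribution of the internal-block identity to (\ref{eq-coderiCM}) is more accurate than the paper's text, which cites (\ref{eq-rightcomodpair}) for that step (evidently a slip).
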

\begin{proof}
We firstly have
\begin{align}
&(\delta_{c}\circ \omega)(f_{n})\nonumber \\
\stackrel{(\ref{eq-diffenrial Hoch})(\ref{omega})}{=}&
\tag{a1}({\Id}_{C}\otimes(\sum_{i=1}^{n}({\Id}_{C^{\otimes(i-1)}}\otimes \psi_C\otimes {\Id}_{C^{\otimes(n-i)}})\circ f_{n})\circ \rho_l- ({\Id}_{C}\otimes f_{n}\c \psi_M)\circ\rho_l \label{a1}\\
&+\tag{a2}\sum_{i=1}^{n}\sum_{j=1}^{n}(-1)^{i}({\Id}_{C^{\otimes(i-1)}}\otimes \Delta\otimes {\Id}_{C^{\otimes(n-i)}} )\circ({\Id}_{C^{\otimes(j-1)}}\otimes \psi_C\otimes {\Id}_{C^{\otimes(n-j)}})\circ f_{n} \label{a2}\\
&\tag{a3}\label{a3}-\sum_{i=1}^{n}(-1)^{i}({{\Id}}_{C^{\otimes(i-1)}}\otimes \Delta\otimes {\Id}_{C^{\otimes(n-i)}} )\circ f_{n}\circ \psi_M \\
&
+(-1)^{n+1}\big( (\sum_{i=1}^{n}({\Id}_{C^{\otimes(i-1)}}\otimes \psi_C\otimes {\Id}_{C^{\otimes(n-i)}})\circ f_{n}\otimes {\Id}_{C} \big)\circ\rho_r \notag\\
&-(-1)^{n+1}(f_{n}\c \psi_M\otimes  {\Id}_{C})\circ\rho_r \label{a4}\tag{a4}
\end{align}
and
\begin{align}
&(\o \c\delta_c) (f_n) =\o (\delta_c (f_n)) \nonumber\\
\stackrel{(\ref{eq-diffenrial Hoch})(\ref{omega})}{=}&
\label{b1}\tag{b1}
\sum_{i=1}^{n+1}({\Id}_{C^{\t(i-1)}}\t  \psi_C\t {\Id}_{C^{\otimes(n+1-i)}})\c({\Id}_C\t f_n)\c\rho_l -({\Id}_C\t f_n)\c\rho_l\c \psi_M\\
&+\label{b2}\tag{b2}
\sum_{i=1}^{n+1}\sum_{j=1}^{n}(-1)^j ({\Id}_{C^{\t(i-1)}}\t  \psi_C\t {\Id}_{C^{\t(n+1-i)}})\c ({\Id}_{C^{\t(j-1)}}\t \d\t {\Id}_{C^{\t(n-j)}})\c f_n \\
&-\label{b3}\tag{b3}
\sum_{i=1}^n (-1)^i({\Id}_{C^{\t i-1}} \t \d\t{\Id}_{C^{\t n-i}})\c f_n \c \psi_M \\
&
+\sum_{i=1}^{n+1}(-1)^{n+1}({\Id}_{C^{\t(i-1)}}\t  \psi_C\t {\Id}_{C^{\otimes(n+1-i)}})\c(f_n\t{\Id}_C)\c \rho_r \notag\\
 &-(-1)^{n+1}(f_n \t {\Id}_C)\c\rho_r\c \psi_M.\label{b4}\tag{b4}
\end{align}
Then
\begin{align*}
&(\ref{b1})\\
=&\sum_{i=1}^{n+1}({\Id}_{C^{\t(i-1)}}\t  \psi_C\t {\Id}_{C^{\otimes(n+1-i)}})\c({\Id}_C\t f_n)\c\rho_l -({\Id}_C\t f_n)\c\rho_l\c \psi_M \\
=&(\psi_C\t{\Id}_{C^{\t n}})\c({\Id}_C \t f_n)\c \rho_l\\
&+\big({\Id}_C \t\sum_{i=1}^n({\Id}_{C^{\t (i-1)}}\t \psi_C\t{\Id}_{C^{\t (n-i)}})\big)\c({\Id}_C \t f_n)\c \rho_l-({\Id}_C \t f_n)\c\rho_l\c \psi_M\\
=&(\psi_C\t f_n)\c \rho_l +\big({\Id}_C \t \sum_{i=1}^{n}({\Id}_{C^{\t (i-1)}}\t \psi_C\t{\Id}_{C^{\t (n-i)}})\c f_n\big)\c\rho_l \\
&-({\Id}_C \t f_n)\c\rho_l\c \psi_M\\
=&({\Id}_C \t f_n)(\psi_C\t{\Id}_C)\c\rho_l+\big({\Id}_C \t \sum_{i=1}^{n}({\Id}_{C^{\t (i-1)}}\t \psi_C\t{\Id}_{C^{\t (n-i)}})\c f_n\big)\c\rho_l\\
&-({\Id}_C \t f_n)\c\rho_l\c \psi_M\\
\stackrel{(\ref{eq-leftcomodpair})}{=}&-({\Id}_C \t f_n)({\Id}_C \t \psi_M)\c\rho_l +({\Id}_C\otimes(\sum_{i=1}^{n}({\Id}_{C^{\otimes(i-1)}}\otimes \psi_C\otimes {\Id}_{C^{\otimes(n-i)}})\circ f_{n})\circ \rho_l\\
=&(\ref{a1}).
\end{align*}
Similarly,  we get (\ref{a2})=(\ref{b2}) by (\ref{eq-rightcomodpair}). Obviously, (\ref{a3})=(\ref{b3}). Now we only show that (\ref{b4})=(\ref {a4}). Observe that
\begin{align*}
(\ref{b4})=&\sum_{i=1}^{n+1}(-1)^{n+1}({\Id}_{C^{\t(i-1)}}\t  \psi_C\t {\Id}_{C^{\otimes(n+1-i)}})\c(f_n\t{\Id}_C)\c \rho_r \\
&-(-1)^{n+1}(f_n \t {\Id}_C)\c\rho_r\c \psi_M \\
=&(-1)^{n+1}\big(\sum_{i=1}^{n}({\Id}_{C^{\otimes(i-1)}}\otimes \psi_C\otimes {\Id}_{C^{\otimes(n-i)}})\t {\Id}_C \big)\c(f_n \t {\Id}_C)\c \rho_r\\
&+(-1)^{n+1}({\Id}_{C^{\otimes n}}\otimes \psi_C)\c(f_n \t {\Id}_C)\c \rho_r-(-1)^{n+1}(f_n \t {\Id}_C)\c\rho_r\c \psi_M \\
=&(-1)^{n+1}\big(\sum_{i=1}^{n}({\Id}_{C^{\otimes(i-1)}}\otimes \psi_C\otimes {\Id}_{C^{\otimes(n-i)}})\t {\Id}_C \big)\c(f_n \t {\Id}_C)\c \rho_r\\
&+(-1)^{n+1}(f_n\otimes {\Id}_C)\c({\Id}_M \t \psi_C)\c \rho_r-(-1)^{n+1}(f_n \t {\Id}_C)\c\rho_r\c \psi_M \\
\stackrel{(\ref{eq-rightcomodpair})}{=}&(-1)^{n+1}\big(\sum_{i=1}^{n}({\Id}_{C^{\otimes(i-1)}}\otimes \psi_C\otimes {\Id}_{C^{\otimes(n-i)}})\c f_n\t {\Id}_C \big)\c \rho_r \\
&- (-1)^{n+1}(f_n\otimes {\Id}_C)\c(\psi_M \t {\Id}_C)\c \rho_r\\
=&(\ref{a4}).
\end{align*}
Hence,  $\omega\circ\delta_{c}=\delta_{c}\circ \omega$.
\end{proof}
By Lemma \ref{lemma-omega}, it follows that
\begin{lemma}
The map $d_c$ is a coboundary operator, that is, $d_c\circ d_c=0$.
\end{lemma}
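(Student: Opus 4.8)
The plan is to verify $d_c \circ d_c = 0$ by checking it separately on the two types of cochain degree, namely the base case $n=1$ and the general case $n \geq 2$, reducing everything to the already-known identity $\delta_c \circ \delta_c = 0$ for the Hochschild coalgebra complex together with the commutation relation $\delta_c \circ \omega = \omega \circ \delta_c$ from Lemma \ref{lemma-omega}. The key structural observation is that $d_c^n$ is built componentwise out of $\delta_c$ and $\omega$, so the composite $d_c^{n+1} \circ d_c^n$ will be a pair whose entries are combinations of $\delta_c \circ \delta_c$, $\delta_c \circ \omega$, and $\omega \circ \delta_c$, each of which is controlled.

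First I would treat $n \geq 2$. Take $(f_n, g_{n-1}) \in C_{\rm Coder}^n(M,C)$ and compute
\begin{align*}
(d_c^{n+1} \circ d_c^n)(f_n, g_{n-1})
&= d_c^{n+1}\big(\delta_c(f_n),\ \delta_c(g_{n-1}) + (-1)^n \omega(f_n)\big).
\end{align*}
Applying the definition of $d_c^{n+1}$, the first component is $\delta_c(\delta_c(f_n)) = 0$ since $\delta_c \circ \delta_c = 0$ on the Hochschild complex. The second component is
\begin{align*}
\delta_c\big(\delta_c(g_{n-1}) + (-1)^n \omega(f_n)\big) + (-1)^{n+1}\omega(\delta_c(f_n))
&= (-1)^n \delta_c(\omega(f_n)) + (-1)^{n+1}\omega(\delta_c(f_n)),
\end{align*}
where I have used $\delta_c \circ \delta_c = 0$ again to kill the $g_{n-1}$ term. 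By Lemma \ref{lemma-omega} we have $\delta_c(\omega(f_n)) = \omega(\delta_c(f_n))$, so the two surviving terms have opposite signs and cancel, giving $0$. I would then handle the base case $n=1$ essentially identically: compute $d_c^2(d_c^1(f_1)) = d_c^2(\delta_c(f_1), -\omega(f_1))$, whose first component is $\delta_c(\delta_c(f_1)) = 0$ and whose second component is $\delta_c(-\omega(f_1)) + (-1)^2\omega(\delta_c(f_1)) = -\delta_c(\omega(f_1)) + \omega(\delta_c(f_1)) = 0$ again by Lemma \ref{lemma-omega}.

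The conceptual work is entirely absorbed into Lemma \ref{lemma-omega}, which is already proved; the remaining content is the bookkeeping of signs and the observation that $\delta_c \circ \delta_c = 0$ for the underlying coalgebra complex, which is standard and quoted from \cite{DY1, DJ}. Thus I do not expect a genuine obstacle here. The only point requiring minor care is making sure the sign conventions in the definitions of $d_c^1$ and $d_c^n$ are consistent so that the $\omega$-terms cancel rather than add; I would double-check that the exponent on $\omega$ in degree $n$ is $(-1)^n$ and in degree $n+1$ is $(-1)^{n+1}$, which is precisely what produces the opposite signs in the cancellation above.
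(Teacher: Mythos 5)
Your proof is correct and follows essentially the same route as the paper: both reduce $d_c\circ d_c=0$ componentwise to the standard identity $\delta_c\circ\delta_c=0$ together with the commutation $\delta_c\circ\omega=\omega\circ\delta_c$ of Lemma \ref{lemma-omega}, with the signs $(-1)^n$ and $(-1)^{n+1}$ producing the cancellation. The only cosmetic difference is that you treat $n=1$ separately, whereas the paper absorbs it into the uniform computation for $n\geq 1$ (since $C^0_{\rm Coder}(M,C)=0$, the case $n=1$ is just the general formula with $g_0=0$).
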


\begin{proof}
For $n\geq1$, we have
\begin{align*}
(d_c\circ d_c)(f_n,g_{n-1})=&d_c(\delta_cf_n,\delta_c g_{n-1}+(-1)^{n}\omega f_n)\\
=&\big(\delta_c(\delta_cf_n),\delta_c(\delta_cg_{n-1}+(-1)^{n}\omega f_n)+(-1)^{n+1}\omega(\delta_cf_n)\big)\\
=&\big(0,(-1)^{n}(\delta_c\omega-\omega\delta_c)f_n\big)\\
\stackrel{(\ref{eq-lemma-omega})}{=}&(0,0).
\end{align*}
Thus, $d_c$ is a coboundary map.
\end{proof}

 Then we get a cochain complex $(C_{\rm Coder}^{\bullet}(M,C),d_c^{\bullet})$ for a bicomodule $(M,\psi_M)$ of a Coder pair $(C,\psi_C)$. Denote the set of $n$-cocycles by $\Zr_{\rm Coder}^{n}(M,C)$ and the set of $n$-coboundaries by $\Br_{\rm Coder}^{n}(M,C)$, then the corresponding cohomology group is defined by $\Hr_{\rm Coder}^{n}(M,C)=\Zr_{\rm Coder}^{n}( M,C)/ \Br_{\rm Coder}^{n}(M,C)$.
\begin{prop}
Assume $(M,\psi_M)$ is a bicomodule of a Coder pair $(C,\psi_C)$. Then
$$\Hr^1_ {\rm Coder}=\{f|f\in\Zr^1(M,C),\psi_c\circ f=f\circ\psi_M \}.$$
\end{prop}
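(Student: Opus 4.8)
The plan is to prove the statement by directly unwinding the definitions of the first cohomology group of the Coder pair complex, since the assertion is essentially a definitional identification. The key observation I would make at the outset is that the convention fixes $C_{\rm Coder}^{0}(M,C)=0$, so there are no nonzero $0$-cochains and the space of $1$-coboundaries $\Br^1_{\rm Coder}(M,C)=\im(d_c^0)$ is trivial. Therefore $\Hr^1_{\rm Coder}=\Zr^1_{\rm Coder}(M,C)=\ker(d_c^1)$, and the whole problem reduces to computing this kernel.

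Next I would recall that a $1$-cochain is merely a map $f\in\Hom(M,C)=C_{\rm Coder}^1(M,C)$, and that by (\ref{eq-diff1}) the first differential is $d_c^1(f)=(\delta_c(f),-\omega(f))$, an element of $C_c^2(M,C)\times C_c^1(M,C)$. Because the codomain is a direct product, the equation $d_c^1(f)=0$ holds if and only if both coordinates vanish, that is, $\delta_c(f)=0$ and $\omega(f)=0$. The first of these is by definition precisely the condition that $f$ be a Hochschild $1$-cocycle, i.e. $f\in\Zr^1(M,C)$.

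For the second coordinate I would specialize the definition (\ref{omega}) of $\omega$ to the case $n=1$. The sum over $i$ then consists of the single index $i=1$, so the term $\sum_{i=1}^{1}({\Id}_{C^{\otimes 0}}\otimes\psi_C\otimes{\Id}_{C^{\otimes 0}})\circ f$ collapses to $\psi_C\circ f$, giving $\omega(f)=\psi_C\circ f-f\circ\psi_M$. Hence $\omega(f)=0$ is equivalent to the intertwining relation $\psi_C\circ f=f\circ\psi_M$. Combining the two equivalences yields $\ker(d_c^1)=\{f\in\Zr^1(M,C)\mid \psi_C\circ f=f\circ\psi_M\}$, and together with the vanishing of the $1$-coboundaries this gives the claimed description of $\Hr^1_{\rm Coder}$.

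This argument is purely a matter of unwinding the definitions, so I do not expect any genuine obstacle. The only two points that require a little care are the observation that the triviality of the $0$-cochains forces the $1$-coboundaries to vanish (so that $\Hr^1_{\rm Coder}$ really equals the cocycle space rather than a proper quotient), and the correct evaluation of $\omega$ at $n=1$, where one must check that the summation contributes exactly the single term $\psi_C\circ f$.
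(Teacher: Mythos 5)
Your proposal is correct and follows essentially the same route as the paper's own proof: unwind the definition $d_c^1(f)=(\delta_c(f),-\omega(f))$, observe that vanishing of both coordinates means $f\in\Zr^1(M,C)$ and $\psi_C\circ f=f\circ\psi_M$, and use $C^0_{\rm Coder}(M,C)=0$ to identify $\Hr^1_{\rm Coder}$ with the cocycle space. In fact you spell out two points the paper leaves implicit (the triviality of $1$-coboundaries and the evaluation of $\omega$ at $n=1$), which only makes the argument more complete.
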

\begin{proof}
For any $f\in C^1_{\rC}(M,C)=\Hom(M,C)$, then $d_{c}^1(f)=(\delta_{c}(f),-\omega(f))$.
Thus, $f\in\Zr^1_{\rC}(M,C)$ if and only if $f\in \Zr^1(M,C)$ and $\psi_c\circ f=f\circ\psi_M$.
\end{proof}
\newpage
\begin{remark}\label{Remark mapping cone}
~\\
\begin{enumerate}
\item[{\rm (i)}]By Lemma {\rm \ref{lemma-omega}}, we have a morphism between two cochain complexes

$$\xymatrix{
C_{c}^{\bullet}(M,C)  :~~~ \cdot\cdot\cdot\ar[r]&{C_{c}^{n}(M,C)}\ar[r]^{\delta^n_c}\ar[d]^{\omega^n}& {C_{c}^{n+1}(M,C)}\ar[r]\ar[d]^{\omega^{n+1}}&\cdot\cdot\cdot\\
C_{c}^{\bullet}(M,C)  :~~~ \cdot\cdot\cdot\ar[r]&{C_{c}^{n}(M,C)}\ar[r]^{\delta^n_c}&{C_{c}^{n+1}(M,C)}\ar[r]&\cdot\cdot\cdot
}.$$
Then $C_{\rm Coder}^{\bullet}(M,C)=C_{c}^{\bullet}(M,C)\oplus C_{c}^{\bullet}(M,C)[-1]$ is a complex with coboundary operator
\begin{align}
d_c^n=\begin{pmatrix}\delta_c^n&0\\(-1)^n\omega^n&\delta_c^{n-1}\end{pmatrix}.
\end{align}
That is, $C_{\rm Coder}^{\bullet}(M,C)$ can be considered as the mapping cone of $\omega^{\bullet}$. There is an obvious short exact sequence of complexes
\begin{align}
0\longrightarrow C_{c}^{\bullet}(M,C)[-1]\longrightarrow C_{\rm Coder}^{\bullet}(M,C)\longrightarrow C_{c}^{\bullet}(M,C)\longrightarrow 0,
\end{align}
which induces a long exact sequence in the cohomology
\begin{align}\label{eq-long exact}
\cdot\cdot\cdot\rightarrow H^{n-1}(C_{c}^{\bullet}(M,C))\rightarrow \Hr_{\rm Coder}^{n}(M,C)\rightarrow H^{n}(C_{c}^{\bullet}(M,C))\rightarrow\cdot\cdot\cdot.
\end{align}
\item[{\rm (ii)}] In the paper, our main object to study is a coassociative coalgebra $C$ with a coderivation $\psi_C$. $\psi_C$ may be considered as a cooperation of a coassociative coalgebra $C$. Can  the  way to construct the complex of a Coder pair $(C, \psi_C)$ be generalized to an general (co)operation? We are not quite sure. But we guess that it is helpful for constructing a complex of an operator algebra to build a suitable chain map being analogous to  $\omega^\bullet$. For example, recently a cohomology complex of a Rota-Baxter algebra has been established via a suitable chain map {\rm (See \cite[Proposition 6.2]{Wang})}.
\end{enumerate}
\end{remark}

\begin{example}
~~
\begin{enumerate}
\item[{\rm (i)}] In \cite{DY1}, $C$ is coseparable if and only if $C$ is injective as a $C\otimes C^{op}$-comodule. If $C$ is coseparable, then $\Hr^n(M, C)={0}, n\geqslant 1$ for any $C$-bicomodule $M$ {\rm (See \cite[Theorem 3]{DY1})}. Given a coseparable coalgebra $C$ with a coderivation $\psi_C$ and a bicomodule $(M, \psi_M)$ of Coder pair $(C, \psi_C)$, we have that $\Hr_{\rm Coder}^{2}(M, C) = 0$ via Eq. (\ref{eq-long exact}).

\item[{\rm (ii)}]Let $(T^C(V)=T^C, \Delta)$ be a tensor coalgebra over $\field$, whose underlying space is the same as the tensor algebra and the  coproduct is given by
\begin{align*}
\Delta(v_1\otimes v_2\otimes \cdot\cdot\cdot\otimes v_n)=\sum_{i+j=n}\varphi_{i,j}^{-1}(v_1\otimes v_2\otimes \cdot\cdot\cdot\otimes v_n)\in \bigoplus_{i=0}^nT^iV\otimes T^{n-i}V,
\end{align*}
where $\varphi_{i,j}: T^iV\otimes T^j V\rightarrow T^{i+j}V$ is the canonical isomorphism. The tensor coalgebra has only non trivial cohomology $\Hr^0(C_{c}^{\bullet}(T^C,T^C))$ and $\Hr^1(C_{c}^{\bullet}(T^C,T^C))$. Due to Eq. (\ref{eq-long exact}), we have that $\Hr_{\rm Coder}^{3}(T^C,T^C) = 0$ for a Coder pair $(T^C, \psi_{T^C})$.
\end{enumerate}
\end{example}

\section{Deformations of  Coder pairs}\label{secdefomation}
In this section, formal deformations of  a Coder pair and extensions of finite order deformations are studied.
\subsection{Formal deformations}
Let $(C,\d)$ be a coalgebra and $\psi_C$ be a coderivation of $C$, then $(C,\psi_C)$ is a Coder pair.
A deformation of a coalgebra $C$ is a power serie $\Delta_t = \sum_{n=0}^\infty \Delta_n t^n$ in which each $\Delta_n \in C^2_c(C,C)$ with $\Delta_0 = \Delta$, such that $\Delta_t$ is coassociative: $({\Id}_C \otimes \Delta_t) \c\Delta_t = (\Delta_t \otimes {\Id}_C)\c \Delta_t$.
In particular, $\Delta_t$ gives an $\mathbb{F} [[ t ]]$-coalgebra structure on the module of power series $C [[ t ]]$ with coefficients in $C$ that restricts to the original coalgebra structure on $C$ when setting $t = 0$.

Consider a $t$-parametrized family of linear maps
\begin{align*}
\d_{t}&=\sum_{i\geq0}\d_{i} t^{i},\quad \d_{i}\in C_{c}^2(C, C),~\d_{0}=\d,\\
\psi_{t}&=\sum_{i\geq0}\psi_{i} t^{i},\quad \psi_{i}\in C_{c}^{1}(C,C),~\psi_{0}=\psi_C.
\end{align*}
\begin{definition}
If $\Omega_t=\big(\d_{t},\psi_{t}\big)$ satisfies
\begin{align}
\label{def1}&({\Id}_C \otimes \Delta_t) \c\Delta_t = (\Delta_t \otimes {\Id}_C) \c\Delta_t,\\
&\Delta_t \c \psi_t=(\psi_t\otimes {\Id}_C)\circ \Delta_t + ({\Id}_C\otimes \psi_t)\circ \Delta_t .\label{def2}
\end{align}
Then we say that  $\Omega_t=\big(\d_{t},\psi_{t}\big)$ is a 1-parameter formal deformation of the Coder pair $(C,\psi_C)$.
\end{definition}
If $\Omega_t$ is a 1-parameter formal  deformation of a Coder pair $(C,\psi_C)$, then $(C[[t]], \d_t)$ is a coalgebra, and $\psi_{t}$ is a coderivation of $C[[t]]$.
For convenience, we will write $\Omega_t=\sum_{i=0}^{\infty}\Omega_i t^i$ with $\Omega_i=(\d_i,\psi_i)\in C_{\rC}^{2}(C,C).$\\
\indent Expanding the equations (\ref{def1}), (\ref{def2}) and collecting coefficients of $t^{k}$, we see that (\ref{def1}), (\ref{def2}) are equivalent to the system of equations
\begin{align}
\label{def3}&\sum\limits_{\begin{subarray}{l}i+j=k\\i,j\geq0\end{subarray}}({\Id}_C \otimes \Delta_i) \c\Delta_j - (\Delta_i \otimes {\Id}_C) \c\Delta_j=0,\\
\label{def4}&\sum\limits_{\begin{subarray}{l}i+j=k\\i,j\geq0\end{subarray}}\Delta_i \c \psi_j-(\psi_i\otimes {\Id}_C)\circ \Delta_j - ({\Id}_C\otimes \psi_i)\circ \Delta_j=0.
\end{align}

\begin{remark}
For $n=0$, (\ref{def3}) and (\ref{def4}) both holds automatically since $(C,\psi_C)$ is a Coder pair.
\end{remark}

\begin{prop}\label{z2}
Let $\big(\d_t,\psi_t)$ be a 1-parameter formal deformation of the Coder pair $(C,\psi_C)$, then
$\big(\d_1,\psi_1)\in C^2_{\rC}(C,C)$ is a $2$-cocycle  in the cohomology of the Coder pair $(C,\psi_C)$ with coefficients in itself.
Moreover, if $(\d_i,\psi_i)=0$, for $ i\geqslant 1$, then $(\d_{i+1},\psi_{i+1})$ is a $2$-cocycle in $C_{\rC}^{2}(C,C)$.
\end{prop}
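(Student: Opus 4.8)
The plan is to show that the deformation equations (\ref{def3}) and (\ref{def4}) at order $k=1$ are, respectively, precisely the two components of the cocycle condition $d_c^2(\d_1,\psi_1)=(0,0)$ for the coadjoint bicomodule $M=(C,\psi_C)$. Recall that on $2$-cochains the operator reads $d_c^2(f_2,g_1)=\bigl(\delta_c(f_2),\,\delta_c(g_1)+\omega(f_2)\bigr)$, so $(\d_1,\psi_1)$ is a $2$-cocycle exactly when $\delta_c(\d_1)=0$ and $\delta_c(\psi_1)+\omega(\d_1)=0$. I would treat these two equations as the two things to be proved, matching them against (\ref{def3}) and (\ref{def4}) with $k=1$.

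First I would expand (\ref{def3}) at $k=1$. Using $\d_0=\d$ and the coassociativity (\ref{defcoalgebras}) of $\d$ to cancel the two $i=j=0$ and $i=j=1$\,--\,irrelevant terms, the surviving terms are
\begin{align*}
({\Id}_C\otimes\d)\c\d_1+({\Id}_C\otimes\d_1)\c\d-(\d\otimes{\Id}_C)\c\d_1-(\d_1\otimes{\Id}_C)\c\d=0.
\end{align*}
I would then recognise the left-hand side as $-\delta_c(\d_1)$ (up to sign) by writing out the Hochschild differential (\ref{eq-diffenrial Hoch}) for $f=\d_1\in C_c^2(C,C)$ on the coadjoint bicomodule, where $\rho_l=\rho_r=\d$; the four boundary terms of $\delta_c(\d_1)$ are exactly $({\Id}_C\otimes\d_1)\c\d$, $-({\Id}_C\otimes\d\otimes{\Id}_C\text{-type})\circ\d_1$ split into the two middle insertions, and $-(\d_1\otimes{\Id}_C)\c\d$, matching the display after reindexing. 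This gives $\delta_c(\d_1)=0$, the first cocycle component; this is essentially the classical fact that an infinitesimal coalgebra deformation is a Hochschild $2$-cocycle.

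Next I would expand (\ref{def4}) at $k=1$, keeping $\d_0=\d$, $\psi_0=\psi_C$ and cancelling the order-zero identity (\ref{eq-coderiCM}). The surviving terms group into a $\psi_1$-part and a $\d_1$-part:
\begin{align*}
\d\c\psi_1-(\psi_1\otimes{\Id}_C)\c\d-({\Id}_C\otimes\psi_1)\c\d
+\d_1\c\psi_C-(\psi_C\otimes{\Id}_C)\c\d_1-({\Id}_C\otimes\psi_C)\c\d_1=0.
\end{align*}
I would identify the first three terms with $\delta_c(\psi_1)$, where now $\psi_1\in C_c^1(C,C)=\Hom(C,C)$ and $\delta_c(\psi_1)=({\Id}_C\otimes\psi_1)\c\d-\d\c\psi_1+(\psi_1\otimes{\Id}_C)\c\d$ (up to an overall sign coming from the $(-1)^i$ pattern in (\ref{eq-diffenrial Hoch}) with $n=1$), and the last three with $-\omega(\d_1)$, since by definition (\ref{omega}) $\omega(\d_1)=(\psi_C\otimes{\Id}_C)\c\d_1+({\Id}_C\otimes\psi_C)\c\d_1-\d_1\c\psi_C$. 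The main care needed is bookkeeping of signs: I must check that the sign conventions of $\delta_c$ and $\omega$ combine so that (\ref{def4}) reads precisely $\delta_c(\psi_1)+\omega(\d_1)=0$, which is the second cocycle component. For the moreover clause, if $(\d_i,\psi_i)=0$ for $1\le i\le r$ (I read the hypothesis as "vanishing up to order $r$"), then in (\ref{def3}),(\ref{def4}) at $k=r+1$ every mixed product $\d_i\c\d_j$ or $\psi_i\c\d_j$ with both indices in $[1,r]$ drops out, leaving only the terms where one factor is $\d_0=\d$ or $\psi_0=\psi_C$ and the other is $(\d_{r+1},\psi_{r+1})$; these are structurally identical to the $k=1$ equations with $(\d_1,\psi_1)$ replaced by $(\d_{r+1},\psi_{r+1})$, so the same identification yields $d_c^2(\d_{r+1},\psi_{r+1})=(0,0)$.

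The routine but genuinely error-prone step — the one I expect to be the main obstacle — is the sign-matching in the second component: pinning down how the $(-1)^i$ in the Hochschild differential (\ref{eq-diffenrial Hoch}) at $n=1$ and the minus sign inside $\omega$ (\ref{omega}) interact with the signs implicit in collecting the $t$-coefficient, so that the deformation identity (\ref{def4}) lands exactly on $\delta_c(\psi_1)+\omega(\d_1)=0$ rather than on a sign-twisted variant. Once the conventions are fixed consistently this is a finite check, and the first component $\delta_c(\d_1)=0$ is standard.
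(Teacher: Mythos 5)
Your proposal is correct and follows essentially the same route as the paper: expand the deformation equations (\ref{def3}) and (\ref{def4}) at $k=1$, and identify the two resulting identities with $\delta_c(\d_1)=0$ and $\delta_c(\psi_1)+\omega(\d_1)=0$, i.e.\ with the two components of $d_c^2(\d_1,\psi_1)=0$; your sign bookkeeping works out exactly as you anticipate (the $k=1$ expansion is precisely $-\bigl(\delta_c(\psi_1)+\omega(\d_1)\bigr)=0$). Your explicit treatment of the ``moreover'' clause via vanishing of mixed terms at order $r+1$ is also correct, and is in fact slightly more detailed than the paper, which leaves that part implicit.
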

\begin{proof}
Let $k=1$, then (\ref{def3}) is equal to
\begin{align*}
({\Id}_C \t \d_1)\c \d + ({\Id}_C \t \d)\c \d_1 -(\d_1\t {\Id}_C)\c \d -(\d\t {\Id}_C)\c \d_1=0.
\end{align*}
That is to say $\delta_c(\d_{1})=0$, and (\ref{def4}) is equal to
\begin{align*}
&\Delta_1 \c \psi_C-(\psi_1\otimes {\Id}_C)\circ \Delta - ({\Id}_C\otimes \psi_1)\circ \Delta+\Delta \c \psi_1\\
&-(\psi_C\otimes {\Id}_C)\circ \Delta_1 - ({\Id}_C\otimes \psi_C)\circ \Delta_1=0.
\end{align*}
It follows that $\delta_c(\psi_{1})+\omega(\d_{1})=0$,
thus we have
$$d_{c}^{2}\big(\d_{1},\psi_{1}\big)=\big(\delta_c(\d_{1}),\delta_c(\psi_{1})+\omega(\d_{1}))=0.$$
That is, $\big(\d_1,\psi_1)$ is a $2$-cocycle in $C_{\rC}^{2}(C,C)$.
\end{proof}
\begin{definition}
The $2$-cocycle $(\Delta_1,\psi_1)$ is called the infinitesimal of the 1-parameter formal deformation $(\Delta_t, \psi_t)$ of the Coder pair $(C,\psi_C)$.
\end{definition}
\begin{definition}
Let $\big(\d_{t},\psi_{t}\big)$ and $\big(\overline{\d}_{t},\overline{\psi}_{t}\big)$ be two deformations of $(C,\psi_C)$. A formal automorphism from $\big(\d_{t},\psi_{t}\big)$ to $\big(\overline{\d}_{t},\overline{\psi}_{t}\big)$ is a power series $\Phi_{t}=\sum_{i\geq0}\phi_{i}t^{i}:C[[t]]\rightarrow C[[t]]$, where $\phi_{i}\in C_c^{1}(C,C)$ with $\phi_{0}={\Id}_C$, such that
\begin{align}
 \overline{\d}_t\c\Phi_{t}&=(\Phi_t \t \Phi_t)\c \d_t\label{ed1},\\
 \overline{\psi}_{t}\c \Phi_{t}&=\Phi_{t}\c \psi_{t}\label{ed2}.
\end{align}

Two 1-parameter deformations  $\big(\d_{t},\psi_{t}\big)$  and $\big(\overline{\d}_{t},\overline{\psi}_{t}\big)$ are said to be equivalent if there exists a formal automorphism $\Phi_{t}:\big(\d_{t},\psi_{t}\big)\rightarrow\big(\overline{\d}_{t},\overline{\psi}_{t}\big)$.
\end{definition}
\begin{theorem}
The infinitesimals of two equivalent 1-parameter deformations of Coder pair $(C,\psi_C)$ are in the same cohomology class.
\end{theorem}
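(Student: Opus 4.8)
The plan is to exhibit the difference of the two infinitesimals as an explicit $2$-coboundary. Concretely, if $\Phi_t=\sum_{i\geq 0}\phi_i t^i$ (with $\phi_0={\Id}_C$) is the formal automorphism realising the equivalence, I will show that
$$(\overline{\Delta}_1,\overline{\psi}_1)-(\Delta_1,\psi_1)=d_c^1(\phi_1),$$
where $\phi_1\in C^1_{\rm Coder}(C,C)=\Hom(C,C)$. Since $d_c^1(\phi_1)\in\Br^2_{\rm Coder}(C,C)$, this places the two infinitesimals in the same class of $\Hr^2_{\rm Coder}(C,C)$ (computed with coefficients in the coadjoint bicomodule).

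First I would expand the defining relation (\ref{ed1}) as a power series in $t$ and collect the coefficient of $t^1$. Using $\overline{\Delta}_0=\Delta_0=\Delta$ and $\phi_0={\Id}_C$, and expanding $(\Phi_t\otimes\Phi_t)\c\Delta_t$ over all index triples with $a+b+c=1$, the $t^1$-term reads
$$\overline{\Delta}\c\phi_1+\overline{\Delta}_1=(\phi_1\otimes{\Id}_C)\c\Delta+({\Id}_C\otimes\phi_1)\c\Delta+\Delta_1,$$
so that
$$\overline{\Delta}_1-\Delta_1=({\Id}_C\otimes\phi_1)\c\Delta-\Delta\c\phi_1+(\phi_1\otimes{\Id}_C)\c\Delta.$$
I then recognise the right-hand side as $\delta_c(\phi_1)$: this is exactly the Hochschild differential (\ref{eq-diffenrial Hoch}) specialised to $n=1$ and to the coadjoint bicomodule $M=C$ with $\rho_l=\rho_r=\Delta$. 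This gives $\overline{\Delta}_1-\Delta_1=\delta_c(\phi_1)$.

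Next I would treat (\ref{ed2}) the same way. Its $t^1$-coefficient, using $\overline{\psi}_0=\psi_0=\psi_C$ and $\phi_0={\Id}_C$, is
$$\psi_C\c\phi_1+\overline{\psi}_1=\psi_1+\phi_1\c\psi_C,$$
whence $\overline{\psi}_1-\psi_1=\phi_1\c\psi_C-\psi_C\c\phi_1$. Comparing with the definition (\ref{omega}) of $\omega$ at $n=1$ with $\psi_M=\psi_C$, namely $\omega(\phi_1)=\psi_C\c\phi_1-\phi_1\c\psi_C$, yields $\overline{\psi}_1-\psi_1=-\omega(\phi_1)$. Combining the two computations with the definition (\ref{eq-diff1}) of $d_c^1$ gives
$$(\overline{\Delta}_1,\overline{\psi}_1)-(\Delta_1,\psi_1)=\big(\delta_c(\phi_1),-\omega(\phi_1)\big)=d_c^1(\phi_1),$$
a $2$-coboundary, so the two infinitesimals are cohomologous.

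The argument is essentially bookkeeping, and the only delicate point is the sign- and index-matching when specialising the general operators $\delta_c$ and $\omega$ to degree $n=1$ on the coadjoint bicomodule; once those identifications are pinned down, the conclusion is immediate. I therefore expect the main (and quite minor) obstacle to be verifying that the $t^1$-coefficient extracted from the nonlinear relation (\ref{ed1}) — where $(\Phi_t\otimes\Phi_t)\c\Delta_t$ must be expanded over all $a+b+c=1$ — reassembles precisely into $\delta_c(\phi_1)$ rather than a sign variant, and that the linear relation (\ref{ed2}) produces exactly $-\omega(\phi_1)$ to match the second slot of $d_c^1$.
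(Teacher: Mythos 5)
Your proposal is correct and follows essentially the same route as the paper: extract the $t^1$-coefficients of (\ref{ed1}) and (\ref{ed2}), identify the difference of infinitesimals with $\big(\delta_c(\phi_1),-\omega(\phi_1)\big)=d_c^1(\phi_1)$, and conclude the classes agree in $\Hr^2_{\rm Coder}(C,C)$. Your explicit verification that the signs match the specialisations of $\delta_c$ and $\omega$ at degree $1$ on the coadjoint bicomodule is exactly the bookkeeping the paper leaves implicit.
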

\begin{proof}
Let $\Phi_{t}:\big(\d_{t},\psi_{t}\big)\rightarrow\big(\overline{\d}_{t},\overline{\psi}_{t}\big)$ be a formal automorphism, then compare coefficients of $t$ in (\ref{ed1}) and (\ref{ed2}), we have
\begin{align*}
\overline{\d}_{1}+\d \c\phi_{1}&=\d_{1}+({\Id}_C \t \phi_1)\c \d+(\phi_1 \t {\Id}_C)\c \d,
\\
\overline{\psi}_{1}+ \psi_C\c \phi_{1}&=\psi_{1}+\phi_{1}\c \psi_C.
\end{align*}
Thus we have
$$\big(\overline{\d}_{1},\overline{\psi}_{1}\big)=\big(\d_{1},\psi_{1}\big)+d_{c}^{1}(\phi_{1}),$$
which implies that $\big(\overline{\d}_{1},\overline{\psi}_{1}\big)$ and $\big(\d_{1},\psi_{1}\big)$ are in the same cohomology class.
\end{proof}
\begin{definition}
A 1-parameter fomal deformation $\big(\d_t,\psi_t\big)$ of a Coder pair $(C,\psi_C)$ is said to be trivial if it is equivalent to $\big(\d,\psi_C\big)$.
\end{definition}
\begin{definition}
A Coder pair $(C,\psi_C)$ is said to be rigid if every 1-parameter formal deformation of $(C,\psi_C)$ is trivial.
\end{definition}
\begin{theorem}\label{Thm-rigid}
If $\Hr_{\rC}^{2}(C,C)=0$, then the Coder pair $(C,\psi_C)$ is rigid.
\end{theorem}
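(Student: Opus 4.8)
The plan is to prove rigidity by an order-raising induction: starting from an arbitrary $1$-parameter formal deformation $\Omega_t=(\d_t,\psi_t)$ of $(C,\psi_C)$, I would produce a formal automorphism that makes it equivalent to a deformation whose lowest nonconstant term sits in ever higher order, and then pass to the limit to reach the trivial deformation $(\d,\psi_C)$.

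First I would set up the induction. Suppose $\Omega_t$ is \emph{not} trivial and let $p\geqslant 1$ be the smallest index with $\Omega_p=(\d_p,\psi_p)\neq 0$, so that $\Omega_i=0$ for $1\leqslant i\leqslant p-1$. The ``moreover'' part of Proposition~\ref{z2} then applies (its hypothesis is exactly that all lower-order terms vanish) and yields that $\Omega_p$ is a $2$-cocycle, $d_c^2(\d_p,\psi_p)=0$. Invoking the hypothesis $\Hr_{\rC}^{2}(C,C)=0$, I can write $\Omega_p$ as a coboundary: there exists $\phi\in C_{\rC}^{1}(C,C)=\Hom(C,C)$ with
$$(\d_p,\psi_p)=d_c^{1}(\phi)=\big(\delta_c(\phi),-\omega(\phi)\big),$$
that is, $\d_p=\delta_c(\phi)$ and $\psi_p=-\omega(\phi)=\phi\circ\psi_C-\psi_C\circ\phi$.

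Next I would use $\phi$ to build the order-raising automorphism. Set $\Phi_t=\Id_C-\phi\,t^{p}$, a power series with $\Phi_0=\Id_C$, hence invertible over $\F[[t]]$ with $\Phi_t^{-1}=\Id_C+\phi\,t^{p}+O(t^{2p})$. Define the transported deformation $\overline{\Omega}_t=(\overline{\d}_t,\overline{\psi}_t)$ through the equivalence relations (\ref{ed1}) and (\ref{ed2}), namely $\overline{\d}_t=(\Phi_t\otimes\Phi_t)\circ\d_t\circ\Phi_t^{-1}$ and $\overline{\psi}_t=\Phi_t\circ\psi_t\circ\Phi_t^{-1}$; by construction $\Phi_t$ is a formal automorphism and $\overline{\Omega}_t$ is equivalent to $\Omega_t$. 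Expanding both expressions modulo $t^{p+1}$ and using $\d_i=\psi_i=0$ for $1\leqslant i\leqslant p-1$, the coefficient of $t^{p}$ in $\overline{\d}_t$ is $\d_p-\big((\Id_C\otimes\phi)\circ\d-\d\circ\phi+(\phi\otimes\Id_C)\circ\d\big)=\d_p-\delta_c(\phi)$, while the coefficient of $t^p$ in $\overline{\psi}_t$ is $\psi_p-(\phi\circ\psi_C-\psi_C\circ\phi)=\psi_p+\omega(\phi)$. By the choice of $\phi$ both vanish, so $\overline{\Omega}_i=0$ for all $1\leqslant i\leqslant p$; the equivalent deformation now starts in order $\geqslant p+1$.

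Finally I would iterate and pass to the limit. Applying the previous step repeatedly yields automorphisms $\Phi_t^{(p)},\Phi_t^{(p+1)},\dots$ each of the form $\Id_C+O(t^{k})$ with strictly increasing $k$, so the infinite composite $\Phi_t=\cdots\circ\Phi_t^{(p+1)}\circ\Phi_t^{(p)}$ is a well-defined formal automorphism in the $t$-adic topology, since only finitely many factors affect any fixed power of $t$. This composite carries $\Omega_t$ to a deformation all of whose positive-order terms vanish, namely $(\d,\psi_C)$, proving that $\Omega_t$ is trivial and hence that $(C,\psi_C)$ is rigid. The main obstacle is the bookkeeping in the order-$t^p$ expansion of $\overline{\d}_t$ and $\overline{\psi}_t$: one must verify that, after conjugation by $\Phi_t$, the leading correction is precisely $-d_c^{1}(\phi)$, matching the degree-one formulas for $\delta_c$ and $\omega$; the $t$-adic convergence of the infinite composition is then routine.
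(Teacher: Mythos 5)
Your proof is correct and follows essentially the same route as the paper: invoke Proposition~\ref{z2} to see that the lowest-order nonvanishing term is a $2$-cocycle, use $\Hr_{\rC}^{2}(C,C)=0$ to write it as a coboundary $d_c^{1}(\phi)$, and conjugate by the formal automorphism ${\Id}_C\mp\phi\,t^{p}$ to push the deformation into higher order. The only difference is presentational: the paper carries out the computation at order $1$ and concludes with ``by repeating the argument,'' whereas you make the induction on the lowest nonvanishing order explicit and justify the $t$-adic convergence of the infinite composite of automorphisms, which is a welcome tightening of the paper's final step.
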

\begin{proof}
Let $(\d_t,\psi_t)$ be a 1-parameter formal deformation of the Coder pair $(C,\psi_C)$. By Proposition \ref{z2}, $(\d_1,\psi_1) \in \Zr_{\rC}^{2}(C,C)$. By $\Hr_{\rC}^{2}(C,C)=0$, there exists a 1-cochain $\phi_1\in  C_{\rC}^{1}(C,C)$ such that
$$(\d_1,\psi_1)=-d_c(\phi_1)=-(\delta_c \phi_1, -\omega \phi_1).$$
Then setting $\Phi_t={{\Id}_A}+\phi_1 t$, we have a deformation $(\overline{\d}_t,\overline{\psi}_t)$ such that
\begin{align*}
\overline{\d}_t&=(\Phi_t \t \Phi_t)\c \d_t\c \Phi_{t}^{-1},\\
 \overline{\psi}_{t}&= \Phi_{t}\c \psi_{t}\circ\Phi_{t}^{-1}.
\end{align*}
Thus, $(\d_t,\psi_t)$ is equivalent to $(\overline{\d}_t,\overline{\psi}_t)$. Moreover, we have
\begin{eqnarray*}
\overline{\d}_t&=&\big(({\Id}_C +\phi_1 t)\t({\Id}_C +\phi_1 t)\big)\c\d_t \c ({\Id}_C-\phi_1t+\phi_1^2t^{2}+\cdots+(-1)^i\phi_1^it^{i}+\cdots),\\
\overline{\psi}_t&=&({\Id}_C +\phi_1 t)\c \psi_t \c ({\Id}_C-\phi_1t+\phi_1^2t^{2}+\cdots+(-1)^i\phi_1^it^{i}+\cdots).
\end{eqnarray*}
Thus we have
\begin{eqnarray*}
\overline{\d}_t&=&\d+(\d_1+({\Id}_C\t \phi_1)\c\d+(\phi_1\t{\Id}_C)\c\d-\d\c\phi_1)t+\overline{\d}_{2}t^{2}+\cdots,\\
\overline{\psi}_t&=&\psi_C+(\psi_1-\psi_C\c\phi_1+\phi_1\c \psi_C)t+\overline{\psi}_{2}t^{2}+\cdots.
\end{eqnarray*}
By $(\d_1,\psi_1)=-d_c(\phi_1)$, we get
\begin{eqnarray*}
\overline{\d}_t&=&\d+\overline{\d}_{2}t^{2}+\cdots,\\
\overline{\psi}_t&=&\psi_C+\overline{\psi}_{2}t^{2}+\cdots.
\end{eqnarray*}
Then, by repeating the argument, we can show that $(\d_t,\psi_t)$ is equivalent to $(\d,\psi_C)$.
\end{proof}

\subsection{Obstructions}
Given a $2$-cocycle $\Omega_1=(\d_1,\psi_1)\in C_{\rC}^{\bullet}(C,C)$, is there a 1-parameter formal deformation of a Coder pair $(C,\psi_C)$ with $\Omega_1$ as its infinitesimal?
The purpose of this subsection is to identify the obstructions for the existence of such a deformation. If such a deformation exists, then $\Omega_1$ is said to be integrable.
\begin{definition}\label{nd}
Let $n\geq1$, a deformation of order $n$ of a Coder pair $(C,\psi_C)$ is a pair $\Omega_t=\big(\d_{t},\psi_{t}\big)$ with $\d_{t}=\sum_{i=0}^{n}\d_{i}t^{i}$ , $\psi_{t}=\sum_{i=0}^{n}\psi_{i} t^{i}$, such that $(\d_{0},\psi_{0}\big)=(\d,\psi_C\big)$ and satisfying the following equations
\begin{align}
\label{def1.1}&({\Id}_C \otimes \Delta_t) \c\Delta_t = (\Delta_t \otimes {\Id}_C) \c\Delta_t  &(\mathrm{mod}~t^{n+1}) ,\\
&\Delta_t \c \psi_t=(\psi_t\otimes {\Id}_C)\circ \Delta_t + ({\Id}_C\otimes \psi_t)\circ \Delta_t &(\mathrm{mod}~ t^{n+1}).\label{def2.1}
\end{align}
\end{definition}
In other words, $\d_t$ defines an $\mathbb{F}[[t]]/(t^{n+1})$-coalgebra structure on $C[[t]]/(t^{n+1})$, and $\psi_t$ is a coderivation of a coalgebra $C[[t]]/(t^{n+1})$.

To answer the integrability question, it suffices to consider the obstruction to extending a deformation $\Omega_t$  of order $n$ to a deformation of $(C,\psi_C)$ of order $n+1$. Then let $\Omega_{n+1}=(\d_{n+1},\psi_{n+1})\in C_{\rC}^{2}(C,C)$ be a $2$-cochain, set $\Omega'_t=\Omega_t+\Omega_{n+1}t^{n+1}$, is $\Omega'_t$ a deformation of $(C,\psi_C)$ of order $n+1$? Since $\Omega'_t=\Omega_t ~~ (\mathrm{mod}~t^{n+1})$, it sufficies to consider the coefficients of $t^{n+1}$ in the deformation of $(C,\psi_C)$.
\begin{definition}
Let $\Omega_t=\big(\d_{t},\psi_{t}\big)$ be a deformation of order $n$ of a Coder pair $(C,\psi_C)$. If there exists a $2$-cochain $\big(\d_{n+1},\psi_{n+1}\big) \in C_{\rm Coder}^{2}(C,C)$, such that the pair $\overline{\Omega}_t=\big(\overline{\d}_{t},\overline{\psi}_{t}\big)$ with
$$\overline{\d}_{t}=\d_{t}+\d_{n+1}t^{n+1},~ \overline{\psi}_{t}=\psi_{t}+\psi_{n+1}t^{n+1}. $$
is a deformation of order $n+1$ of $(C,\psi_C)$, then we say that $\Omega_t$ is extensible.
\end{definition}
Let $\Omega_t=\big(\d_{t},\psi_{t}\big)$ be a deformation of order $n$ of a Coder pair $(C,\psi_C)$, we define  ${\Ob}_\Omega=\big({\Ob}_{C}^{3},{\Ob}_{\psi}^{2}\big)\in C_{\rm Coder}^{3}(C,C)$ by
\begin{align}
\label{eq-obC}{\Ob}_{C}^{3}=&\sum\limits_{i+j=n+1\atop i,j>0}
\Big((\d_i\t{\Id}_C) \c \d_j-({\Id}_C \t \d_i)\c \d_j\Big),\\
\label{eq-obpsi}{\Ob}_{\psi}^{2}=&\sum\limits_{i+j=n+1\atop i,j>0}
\Big(\d_i \c \psi_j -({\Id}_C \t \psi_i)\c \d_j -(\psi_i \t {\Id}_C)\c \d_j
\Big).
\end{align}
\begin{prop}\label{ext2}
Let $\Omega_t=\big(\d_{t},\psi_{t}\big)$ be a deformation of order $n$ of a Coder pair $(C,\psi_C)$, then the 3-cochain ${\Ob}_\Omega=\big({\Ob}_{C}^{3},{\Ob}_{\psi}^{2}\big)$ is a $3$-cocycle of the Coder pair $(C,\psi_C)$ with coefficient in itself.
\end{prop}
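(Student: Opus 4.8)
The plan is to compute $d_c^3$ directly on the pair $({\Ob}_C^3,{\Ob}_\psi^2)$ and show it vanishes. Since $({\Ob}_C^3,{\Ob}_\psi^2)\in C_c^3(C,C)\times C_c^2(C,C)=C_{\rm Coder}^3(C,C)$, the formula $d_c^3(f_3,g_2)=(\delta_c(f_3),\delta_c(g_2)+(-1)^3\omega(f_3))$ reduces the claim to the two identities
\[
\delta_c({\Ob}_C^3)=0 \quad\text{and}\quad \delta_c({\Ob}_\psi^2)=\omega({\Ob}_C^3).
\]
I would establish these separately. The only external inputs are the order-$k$ deformation relations (\ref{def3}) and (\ref{def4}), which hold for every $0\le k\le n$ because $\Omega_t$ is a deformation of order $n$ (Definition \ref{nd}), together with the base coderivation property (\ref{eq-coderiCM}) for $(\Delta,\psi_C)=(\Delta_0,\psi_0)$ and Lemma \ref{lemma-omega}.

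The first identity $\delta_c({\Ob}_C^3)=0$ is the coassociative-coalgebra analogue of Gerstenhaber's fact that the obstruction to deforming a product is a Hochschild $3$-cocycle. I would expand $\delta_c({\Ob}_C^3)$ with the five-term formula (\ref{eq-diffenrial Hoch}), taken over the coadjoint bicomodule so that $\rho_l=\rho_r=\Delta$, applied to each summand $(\Delta_i\otimes{\Id})\circ\Delta_j$ and $({\Id}\otimes\Delta_i)\circ\Delta_j$ of (\ref{eq-obC}). Every resulting term is a triple composite built from two of the deformation coefficients $\Delta_i,\Delta_j$ and one copy of the undeformed $\Delta$, inserted into one of the three tensor slots. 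I would then regroup these composites so that, for each fixed inner factor, the surviving coefficients assemble into a lower-order coassociativity defect $\sum_{i+j=k}[({\Id}\otimes\Delta_i)\circ\Delta_j-(\Delta_i\otimes{\Id})\circ\Delta_j]$ with $k\le n$; here the terms of (\ref{eq-diffenrial Hoch}) carrying the outer $\Delta$ supply exactly the missing $i=0$ and $j=0$ summands needed to complete each partial sum, which then vanishes by (\ref{def3}). This is the familiar pentagon cancellation, routine but lengthy.

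The second identity is the genuinely new input, coupling the coderivation to the comultiplication. I would expand the left side $\delta_c({\Ob}_\psi^2)$ via (\ref{eq-diffenrial Hoch}) for $n=2$ applied to (\ref{eq-obpsi}): each term then carries one coefficient $\psi_a$, one coefficient $\Delta_b$ with $a+b=n+1$ and $a,b>0$, and one copy of the undeformed $\Delta$. On the right, $\omega({\Ob}_C^3)$, computed from (\ref{omega}) with $\psi_M=\psi_C=\psi_0$ applied to (\ref{eq-obC}), produces terms carrying two coefficients $\Delta_i,\Delta_j$ with $i+j=n+1$ and one copy of $\psi_C=\psi_0$. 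Because the two sides present different monomial types, the matching cannot be term-by-term; instead I would use the coderivation relations (\ref{def4}) at orders $\le n$ (which trade $\Delta_a\circ\psi_0$ and $\Delta_0\circ\psi_a$ for mixed $\psi$-$\Delta$ composites), together with (\ref{def3}) and (\ref{eq-coderiCM}), to rewrite both sides into a common form and cancel.

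The hard part will be this reconciliation in the second identity. In contrast to the coalgebra-only part, where coassociativity alone forces the cancellation, here one must deploy (\ref{def3}) and (\ref{def4}) simultaneously while controlling the signs produced by the slot into which $\omega$ inserts $\psi_C$. To tame the bookkeeping I would classify the composites by type and verify slot-by-slot that, after substituting the lower-order relations, the contributions of $\delta_c({\Ob}_\psi^2)$ and $\omega({\Ob}_C^3)$ agree; Lemma \ref{lemma-omega} can shorten this by letting $\omega$ commute past $\delta_c$ on partial sums. The absence of a Gerstenhaber- or Nijenhuis-Richardson-type bracket for coalgebras is precisely what forces this computation to be done by hand, rather than deduced from a graded Jacobi identity as in the associative case of \cite{Das}.
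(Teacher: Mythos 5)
Your blueprint follows essentially the same route as the paper's proof: split $d_c^3\big({\Ob}_{C}^{3},{\Ob}_{\psi}^{2}\big)=0$ into the two identities $\delta_c({\Ob}_{C}^{3})=0$ and $\delta_c({\Ob}_{\psi}^{2})=\omega({\Ob}_{C}^{3})$, then verify them by brute-force expansion against the order-$\le n$ relations (\ref{def3}) and (\ref{def4}). There are, however, three points of divergence worth flagging. First, for $\delta_c({\Ob}_{C}^{3})=0$ the paper does not redo the pentagon computation you outline; it simply cites \cite[Theorem 3]{GS4}, this being the classical fact that obstructions to deforming a coassociative coproduct are Hochschild $3$-cocycles. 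Your self-contained derivation would work, but it duplicates known material and is the longer road. Second, your hope that Lemma \ref{lemma-omega} (the relation $\delta_c\circ\omega=\omega\circ\delta_c$) can shorten the reconciliation of $\delta_c({\Ob}_{\psi}^{2})$ with $\omega({\Ob}_{C}^{3})$ is a dead end: neither obstruction cochain is exhibited as $\delta_c$ or $\omega$ of anything, so the commutation relation has nothing to act on, and indeed the paper's proof never invokes it. Third, and most importantly, what you call ``the hard part'' and defer is the entire mathematical content of the proposition. The paper's proof consists of several pages of labelled terms (e1)--(e3), (f1)--(f29$'''$), in which the deformation relations are used exactly as you predict --- (\ref{def4}) in the rearranged forms (\ref{eq-f6'}) and (\ref{eq-f22'}), and (\ref{def3}) in the forms (\ref{eq-f7'}) and (\ref{eq-f2sum}) --- but where the delicate step is stratifying each sum correctly (separating the $j=0$, $q=0$, and $p+q>0$ pieces) so that every partial sum either completes to a full order-$\le n$ relation or cancels against a mate. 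Your plan is a correct description of this procedure, and the paper's executed computation confirms it succeeds; but as written it is a plan for a proof rather than a proof, since the slot-by-slot cancellation is asserted rather than carried out.
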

Before giving the proof of Proposition \ref{ext2}, we give the following definition and two immediate consequences.

\begin{definition}
Let $\Omega_t=\big(\d_{t},\psi_{t}\big)$ be a deformation of order $n$ of a Coder pair $(C,\psi_C)$. The cohomology class $[({\Ob}_\Omega=\big({\Ob}_{C}^{3},{\Ob}_{\psi}^{2})]\in \Hr_{\rm Coder}^3(C,C)$ is called the obstruction class of $\Omega_t$ being extensible.
\end{definition}
\begin{theorem}\label{ext1}
Let $\Omega_t=\big(\d_{t},\psi_{t}\big)$ be a deformation of order $n$ of a Coder pair $(C,\psi_C)$, then $\Omega_t$ is extensible if and only if the obstruction class
$[\big({\Ob}_{C}^{3},{\Ob}_{\psi}^{2}\big)]$ is trivial.
\end{theorem}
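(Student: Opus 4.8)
The plan is to reduce extensibility to a single coboundary equation in the Coder complex. By definition, $\Omega_t$ is extensible exactly when there is a $2$-cochain $(\Delta_{n+1},\psi_{n+1})\in C_{\rm Coder}^{2}(C,C)$ for which $\overline{\Omega}_t=\Omega_t+\Omega_{n+1}t^{n+1}$ satisfies \eqref{def1.1} and \eqref{def2.1} modulo $t^{n+2}$. Since $\Omega_t$ already satisfies these two equations modulo $t^{n+1}$, the only genuinely new constraint is the vanishing of the coefficient of $t^{n+1}$ in each equation. So the whole argument comes down to reading off these two coefficients and recognizing them as the components of $d_c^2(\Delta_{n+1},\psi_{n+1})$.

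First I would expand the $t^{n+1}$-coefficient of the coassociativity equation \eqref{def1.1}, namely $\sum_{i+j=n+1}\big(({\Id}_C\otimes\Delta_i)\circ\Delta_j-(\Delta_i\otimes{\Id}_C)\circ\Delta_j\big)$, and split it according to whether $i,j>0$ or one index is $0$. The terms with $i,j>0$ are, by \eqref{eq-obC}, precisely $-{\Ob}_{C}^{3}$, whereas the terms involving $\Delta_0=\Delta$ together with $\Delta_{n+1}$ recombine, via the $n=2$ case of the Hochschild differential \eqref{eq-diffenrial Hoch}, into $\delta_c(\Delta_{n+1})$. Thus coassociativity at order $n+1$ is equivalent to $\delta_c(\Delta_{n+1})={\Ob}_{C}^{3}$. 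Carrying out the same splitting for the coderivation equation \eqref{def2.1}, the $i,j>0$ terms give ${\Ob}_{\psi}^{2}$ of \eqref{eq-obpsi}, while the terms with $\Delta_0,\psi_0=\psi_C$ assemble into $-\big(\delta_c(\psi_{n+1})+\omega(\Delta_{n+1})\big)$, using the $n=1$ case of \eqref{eq-diffenrial Hoch} for $\psi_{n+1}$ and the operator $\omega$ of \eqref{omega} applied to $\Delta_{n+1}$. Hence the coderivation constraint is equivalent to $\delta_c(\psi_{n+1})+\omega(\Delta_{n+1})={\Ob}_{\psi}^{2}$. Since $d_c^2(\Delta_{n+1},\psi_{n+1})=\big(\delta_c(\Delta_{n+1}),\,\delta_c(\psi_{n+1})+\omega(\Delta_{n+1})\big)$, the two identities combine into the single equation $d_c^2(\Delta_{n+1},\psi_{n+1})=({\Ob}_{C}^{3},{\Ob}_{\psi}^{2})={\Ob}_\Omega$.

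With this in hand the theorem is immediate. Extensibility means some $(\Delta_{n+1},\psi_{n+1})\in C_{\rm Coder}^{2}(C,C)$ solves $d_c^2(\Delta_{n+1},\psi_{n+1})={\Ob}_\Omega$, i.e. ${\Ob}_\Omega\in\im d_c^2=\Br_{\rm Coder}^{3}(C,C)$; by Proposition \ref{ext2} the cochain ${\Ob}_\Omega$ is a $3$-cocycle, so it is a coboundary if and only if its class $[({\Ob}_{C}^{3},{\Ob}_{\psi}^{2})]\in\Hr_{\rm Coder}^{3}(C,C)$ vanishes, which is the asserted criterion. The step I expect to be most delicate — and would verify term by term — is the sign bookkeeping in the coderivation computation: the $\psi_C$-terms there sum to $-\big(\delta_c(\psi_{n+1})+\omega(\Delta_{n+1})\big)$ rather than to $+\big(\delta_c(\psi_{n+1})+\omega(\Delta_{n+1})\big)$, so it is the cancellation of this sign against the middle term ${\Ob}_{\psi}^{2}$ (together with the correct degree-$1$ specialization of \eqref{eq-diffenrial Hoch}) that yields the clean identity; a single misplaced sign would break the identification $d_c^2(\Delta_{n+1},\psi_{n+1})={\Ob}_\Omega$.
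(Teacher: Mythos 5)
Your proposal is correct and follows essentially the same route as the paper's proof: compare the coefficients of $t^{n+1}$ in \eqref{def1.1} and \eqref{def2.1}, identify them (with the signs you verified) as ${\Ob}_{C}^{3}-\delta_c(\d_{n+1})$ and ${\Ob}_{\psi}^{2}-\big(\delta_c(\psi_{n+1})+\omega(\d_{n+1})\big)$, so that extensibility is exactly solvability of $d_c^2(\d_{n+1},\psi_{n+1})={\Ob}_\Omega$, and then invoke Proposition \ref{ext2} to pass from coboundary to trivial cohomology class. Your write-up is in fact slightly more careful than the paper's, which merely asserts the coefficient identities that you check term by term.
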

\begin{proof}
Suppose a deformation $\Omega_t=\big(\d_{t},\psi_{t}\big)$ of order $n$ of $(C,\psi_C)$  extends to a deformation $\overline{\Omega}_t=\Omega_t +(\d_{n+1},\psi_{n+1})t^{n+1}$ of order $n+1$ of $(C,\psi_C)$. Then (\ref{def1.1}) and (\ref{def2.1}) hold for $\overline{\Omega}_t$, compare the coefficients of $t^{n+1}$ in (\ref{def1.1}) and (\ref{def2.1}), we get
$${\Ob}_{C}^{3}=\delta_c(\d_{n+1}),\quad {\Ob}_{\psi}^{2} =\delta_c (\psi_{n+1})+\omega(\d_{n+1}).$$
which implies that $$\big({\Ob}_{C}^{3},{\Ob}_{\psi}^{2}\big)=d_c(\d_{n+1},\psi_{n+1}).$$
Thus $\big({\Ob}_{C}^{3},{\Ob}_{\psi}^{2}\big)\in B_{\rC}^{3}(C,C)$, which follows that $[\big({\Ob}_{C}^{3},{\Ob}_{\psi}^{2}\big)]$ is trivial.\\
\indent Conversely, let $\big({\Ob}_{C}^{3},{\Ob}_{\psi}^{2}\big)\in B_{\rC}^{3}(C,C)$, and suppose $$\big({\Ob}_{C}^{3},{\Ob}_{\psi}^{2}\big)=d_c(\d_{n+1},\psi_{n+1})$$
for some $2$-cochain $(\d_{n+1},\psi_{n+1})\in C_{\rC}^{2}(C,C)$. Set
$$\overline{\Omega}_t=\Omega_t +(\d_{n+1},\psi_{n+1}).$$
Therefore, $\overline{\Omega}_t$ is a deformation of order $n+1$, i.e., $\Omega_t$ is extensible.
\end{proof}
\begin{cor}\label{ext3}
If $\Hr_{\rC}^3(C,C)=0$, then every $2$-cocycle is the infinitesimal of a 1-parameter formal deformation of a Coder pair $(C,\psi_C)$ .
\end{cor}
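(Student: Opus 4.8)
The plan is to prove Corollary \ref{ext3} by an inductive extension argument, treating a given $2$-cocycle as the infinitesimal of an order-$1$ deformation and then repeatedly invoking Theorem \ref{ext1} to climb to all finite orders, thereby assembling a full formal deformation. First I would take an arbitrary $2$-cocycle $\Omega_1=(\d_1,\psi_1)\in\Zr_{\rC}^2(C,C)$. Setting $\d_t=\d+\d_1 t$ and $\psi_t=\psi_C+\psi_1 t$, I observe that the order-$1$ truncation of the deformation equations (\ref{def1.1}) and (\ref{def2.1}) amounts precisely to the coefficient of $t^1$, which by the computation in Proposition \ref{z2} is equivalent to $\delta_c(\d_1)=0$ and $\delta_c(\psi_1)+\omega(\d_1)=0$, i.e.\ to $d_c^2(\d_1,\psi_1)=0$. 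Since $\Omega_1$ is assumed to be a $2$-cocycle, this holds, so $\Omega_t=(\d_t,\psi_t)$ is a genuine deformation of order $1$ of $(C,\psi_C)$ with $\Omega_1$ as its infinitesimal.

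Next I would run the induction on the order $n$. Suppose inductively that $\Omega_1$ has been extended to a deformation $\Omega_t=(\d_t,\psi_t)$ of order $n$ of $(C,\psi_C)$ for some $n\geq1$. Associated to this order-$n$ deformation is the obstruction $3$-cochain $\Ob_\Omega=(\Ob_C^3,\Ob_\psi^2)\in C_{\rm Coder}^3(C,C)$ defined in (\ref{eq-obC}) and (\ref{eq-obpsi}). By Proposition \ref{ext2}, $\Ob_\Omega$ is a $3$-cocycle, so it determines a cohomology class $[\Ob_\Omega]\in\Hr_{\rC}^3(C,C)$. By the hypothesis $\Hr_{\rC}^3(C,C)=0$, this class is necessarily trivial, i.e.\ the obstruction class of $\Omega_t$ being extensible vanishes. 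Theorem \ref{ext1} then guarantees that $\Omega_t$ is extensible: there exists a $2$-cochain $(\d_{n+1},\psi_{n+1})\in C_{\rC}^2(C,C)$ such that
$$\overline{\Omega}_t=\Omega_t+(\d_{n+1},\psi_{n+1})t^{n+1}$$
is a deformation of order $n+1$. This completes the inductive step and produces, order by order, coefficients $\Omega_i=(\d_i,\psi_i)$ for every $i\geq1$.

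Finally I would collect the resulting data into a single formal power series $\Omega_t=\sum_{i\geq0}\Omega_i t^i=(\sum_{i\geq0}\d_i t^i,\sum_{i\geq0}\psi_i t^i)$ with $\Omega_0=(\d,\psi_C)$. The key point is that the construction is coherent across orders: because each extension only modifies the coefficient of $t^{n+1}$ and leaves all lower-order coefficients untouched, the full series satisfies (\ref{def1}) and (\ref{def2}) modulo $t^{n+1}$ for every $n$, hence satisfies them identically as formal power series. Therefore $\Omega_t$ is a $1$-parameter formal deformation of $(C,\psi_C)$ whose infinitesimal is the chosen $2$-cocycle $\Omega_1=(\d_1,\psi_1)$, which is exactly the assertion. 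I expect the only subtlety, rather than any hard obstacle, to be the bookkeeping that verifies the truncated deformation equations at each stage reduce precisely to the coboundary expressions handled by Theorem \ref{ext1}; this is routine given that all the analytic content has already been isolated in Propositions \ref{z2}, \ref{ext2} and Theorem \ref{ext1}.
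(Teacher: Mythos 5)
Your proposal is correct and is precisely the argument the paper intends: the corollary is stated without proof as an immediate consequence of Proposition \ref{z2} (base case: the $t^1$-coefficients of the truncated deformation equations are exactly the $2$-cocycle condition $d_c^2(\d_1,\psi_1)=0$), Proposition \ref{ext2} (the obstruction is a $3$-cocycle, hence trivial in cohomology when $\Hr_{\rC}^3(C,C)=0$), and Theorem \ref{ext1} (vanishing obstruction class implies extensibility), iterated over all orders. Your additional observation that the coefficients stabilize order by order, so the sequence of finite-order deformations assembles into a genuine formal power series solution of (\ref{def1}) and (\ref{def2}), correctly fills in the only detail the paper leaves implicit.
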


\textbf{Proof of Proposition \ref{ext2}}. We must show that
$d_c\big({\Ob}_{C}^{3},{\Ob}_{\psi}^{2}\big)=0$, i.e., $\big(\delta_c ({\Ob}_{C}^{3}),\delta_c  ({\Ob}_{\psi}^{2})-\omega ({\Ob}_{C}^{3})\big)=0$.  One has already known that $\delta_c ({\Ob}_{C}^{3})=0$ (See \cite[Theorem 3]{GS4}). So it remains to show that
$$\delta_c  ({\Ob}_{\psi}^{2})-\omega ({\Ob}_{C}^{3})=0.$$
To do this, we need to show
\begin{align}
&-\omega ({\Ob}_{C}^{3})\notag\\
\stackrel{(\ref{omega})}{=}&{\Ob}_{C}^{3}\c \psi_C-(\psi_C\t {\Id}_C \t {\Id}_C)\c {\Ob}_{C}^{3} -( {\Id}_C \t \psi_C\t {\Id}_C)\c {\Ob}_{C}^{3} \notag \\
&-( {\Id}_C \t{\Id}_C \t \psi_C)\c {\Ob}_{C}^{3} \notag\\
\stackrel{(\ref{eq-obC})}=&\label{e1}\tag{e1}
\sum_{i+j=n+1\atop i,j>0}(\d_i \t {\Id}_C)\c \d_j \c \psi_C\\
&-\label{e2}\tag{e2}
\sum_{i+j=n+1\atop i,j>0}({\Id}_C\t \d_i )\c \d_j \c \psi_C\\
&\begin{array}{l}\label{e3}\tag{e3}
-(\psi_C\t {\Id}_C \t {\Id}_C)\c\sum\limits_{i+j=n+1\atop i,j>0}\big[(\d_i \t {\Id}_C)\c \d_j-({\Id}_C \t \d_i)\c \d_j \big] \\
-( {\Id}_C \t \psi_C\t {\Id}_C)\c \sum\limits_{i+j=n+1\atop i,j>0}\big[(\d_i \t {\Id}_C)\c \d_j-( {\Id}_C\t \d_i)\c \d_j \big] \\
-( {\Id}_C \t{\Id}_C \t \psi_C)\c \sum\limits_{i+j=n+1\atop i,j>0}\big[(\d_i \t {\Id}_C)\c \d_j-( {\Id}_C\t \d_i)\c \d_j \big]
\end{array}
\end{align}
and
\begin{align*}
\delta_c({\Ob}_{\psi}^{2})\stackrel{(\ref{eq-diffenrial Hoch})}=&({\Id}_C \t{\Ob}_{\psi}^{2} )\c \d-(\d \t {\Id}_C )\c {\Ob}_{\psi}^{2}\\
&+({\Id}_C  \t \d)\c {\Ob}_{\psi}^{2} -({\Ob}_{\psi}^{2} \t {\Id}_C)\c\d.
\end{align*}
\indent Then we have
\begin{align}
&({\Id}_C \t{\Ob}_{\psi}^{2} )\c \d \notag\\
\stackrel{(\ref{eq-obpsi})}=&\Big( {\Id}_C \t \sum_{i+j=n+1 \atop i,j>0}\big(  \d_i\c \psi_j-({\Id}_C\t \psi_j)\c\d_i-(\psi_j\t{\Id}_C) \c\d_i \big)   \Big)\c \d \notag\\
=&\label{f1}\tag{f1}
\sum_{i+j=n+1 \atop i,j>0}({\Id}_C \t \d_i\c \psi_j)\c \d \\
&-\label{f2}\tag{f2}
\sum_{i+j=n+1 \atop i,j>0}({\Id}_C \t {\Id}_C\t \psi_j)\c({\Id}_C \t \d_i)\c \d \\
&-\label{f3}\tag{f3}
\sum_{i+j=n+1 \atop i,j>0}({\Id}_C \t \psi_j \t{\Id}_C)\c({\Id}_C \t \d_i)\c \d,
\end{align}
\begin{align}
&-(\d \t {\Id}_C )\c {\Ob}_{\psi}^{2} \nonumber\\
\stackrel{(\ref{eq-obpsi})}=&-(\d \t {\Id}_C)\c \sum_{i+j=n+1 \atop i,j>0}\big(\d_i \c \psi_j-({\Id}_C \t \psi_j)\c \d_i - (\psi_j \t{\Id}_C)\c \d_i    \big) \notag\\
=&\label{f4}\tag{f4}
-\sum_{i+j=n+1 \atop i,j>0}(\d \t {\Id}_C )\c\d_i \c \psi_j \\
&+\label{f5}\tag{f5}
\sum_{i+q=n+1 \atop i,q>0}({\Id}_C\t {\Id}_C \t \psi_q )\c(\d \t {\Id}_C)\c \d_i \\
&+\label{f6}\tag{f6}
\sum_{i+j=n+1 \atop i,j>0}(\d \t {\Id}_C )\c(\psi_j\t{\Id}_C )\c \d_i,
\end{align}
\begin{align}
&({\Id}_C  \t \d)\c {\Ob}_{\psi}^{2} \notag\\
\stackrel{(\ref{eq-obpsi})}{=}&
\sum_{i+j=n+1 \atop i,j>0}( {\Id}_C\t \d )\c\d_i \c \psi_j \notag\\
&-
\sum_{i+j=n+1 \atop i,j>0}( {\Id}_C\t \d  )\c({\Id}_C \t \psi_j)\c \d_i\notag\\
&-
\sum_{i+j=n+1 \atop i,j>0}( {\Id}_C \t\d  )\c(\psi_j \t {\Id}_C )\c \d_i\notag\\
=&\label{f7}\tag{f7}
\sum_{i+j=n+1 \atop i,j>0}( {\Id}_C\t \d )\c\d_i \c \psi_j \\
&-\label{f8}\tag{f8}
\sum_{i+j=n+1 \atop i,j>0}( {\Id}_C\t \d  )\c({\Id}_C \t \psi_j)\c \d_i\\
&-\label{f9}\tag{f9}
\sum_{i+j=n+1 \atop i,j>0}(\psi_j \t {\Id}_C \t {\Id}_C)\c ( {\Id}_C \t \d)\c\d_i
\end{align}
as well as
\begin{align}
&-({\Ob}_{\psi}^{2} \t {\Id}_C)\c\d \notag\\
=&-\label{f10}\tag{f10}
\sum_{i+j=n+1 \atop i,j>0}(\d_i \c \psi_j \t {\Id}_C )\c \d\\
&+\label{f11}\tag{f11}
\sum_{i+j=n+1 \atop i,j>0}({\Id}_C \t \psi_j \t {\Id}_C)\c(\d_i \t {\Id}_C )\c \d \\
&+\label{f12}\tag{f12}
\sum_{i+j=n+1 \atop i,j>0}(\psi_j \t {\Id}_C \t {\Id}_C)\c(\d_i \t {\Id}_C )\c \d.
\end{align}
\indent By (\ref{def2.1}), for $m\leq n$, we get
$$\sum_{k+l=m \atop k,l\geq0}\big(\Delta_k \c \psi_l-(\psi_l\otimes {\Id}_C)\circ \Delta_k - ({\Id}_C\otimes \psi_l ) \circ \Delta_k \big)=0,$$
which implies that
\begin{align}\label{eq-f6'}
\Delta \c \psi_m=\sum_{k+l=m \atop k,l\geq0}\big((\psi_l\otimes {\Id}_C)\circ \Delta_k + ({\Id}_C\otimes \psi_l)\circ \Delta_k \big)-\sum_{k+l=m \atop k>0,l\geq0}\Delta_k \c \psi_l.
\end{align}
So
\begin{align}
(\ref{f6})=&\sum_{i+j=n+1 \atop i,j>0}(\d \t {\Id}_C )\c(\psi_j\t{\Id}_C )\c \d_i \notag \\
=&\sum_{i+j=n+1 \atop i,j>0}(\d\c \psi_j \t {\Id}_C )\c \d_i \notag \\
\stackrel{(\ref{eq-f6'})}{=}&\sum\limits_{\footnotesize\begin{subarray}{c}p+q+i=n+1\\ p,q\geq0,i>0\\p+q>0\end{subarray}}\Big(\big((\psi_q\otimes {\Id}_C)\circ \Delta_p + ({\Id}_C\otimes \psi_q)\circ \Delta_p \big)\t {\Id}_C\Big)\c \d_i \notag\\
&-\sum\limits_{\footnotesize\begin{subarray}{c}p+q+i=n+1\\ p,i>0\\q\geq0\end{subarray}}(\d_p \c \psi_q\t {\Id}_C)\c \d_i \notag \\
=&\label{f13}\tag{f13}
\sum\limits_{\footnotesize\begin{subarray}{c}p+i=n+1\\ p,i>0\end{subarray}}\Big[\big((\psi_C\otimes {\Id}_C)\circ \Delta_p + ({\Id}_C\otimes \psi_C)\circ \Delta_p \big)\t {\Id}_C\Big]\c \d_i \\
&+\label{f14}\tag{f14}
\sum\limits_{\footnotesize\begin{subarray}{c}p+q+i=n+1\\ p\geq0,~q,i>0\end{subarray}}\Big[\big((\psi_q\otimes {\Id}_C)\circ \Delta_p + ({\Id}_C\otimes \psi_q)\circ \Delta_p \big)\t {\Id}_C\Big]\c \d_i\\
&-\label{f15}\tag{f15}
\sum\limits_{\footnotesize\begin{subarray}{c}p+q+i=n+1\\ p,i>0,~q\geq0\end{subarray}}(\d_p \c \psi_q\t {\Id}_A)\c \d_i.
\end{align}
\begin{align}
(\ref{f8})=&-\sum_{i+j=n+1 \atop i,j>0}( {\Id}_C \t\d  )\c({\Id}_C \t \psi_j)\c \d_i \notag \\
=&-\sum_{i+j=n+1 \atop i,j>0}({\Id}_C \t \d \c \psi_j)\c \d_i \notag \\
\stackrel{(\ref{eq-f6'})}{=}&-\sum\limits_{\footnotesize\begin{subarray}{c}p+q+i=n+1\\ p,q\geq0,i>0\\p+q>0\end{subarray}}\Big[{\Id}_C \t\big((\psi_q\otimes {\Id}_C)\circ \Delta_p + ({\Id}_C\otimes \psi_q)\circ \Delta_p \big)\Big]\c \d_i \notag\\
&+\sum\limits_{\footnotesize\begin{subarray}{c}p+q+i=n+1\\ p,i>0\\q\geq0\end{subarray}}({\Id}_C\t\d_p \c \psi_q)\c \d_i \notag \\
=&-\label{f16}\tag{f16}
\sum\limits_{\footnotesize\begin{subarray}{c}p+i=n+1\\ p,i>0\end{subarray}}\Big[{\Id}_C \t\big((\psi_C\otimes {\Id}_C)\circ \Delta_p + ({\Id}_C\otimes \psi_C)\circ \Delta_p \big)\Big]\c \d_i \\
&-\label{f17}\tag{f17}
\sum\limits_{\footnotesize\begin{subarray}{c}p+q+i=n+1\\ p\geq0~i,q>0\end{subarray}}\Big[{\Id}_C \t\big((\psi_q\otimes {\Id}_C)\circ \Delta_p + ({\Id}_C\otimes \psi_q)\circ \Delta_p \big)\Big]\c \d_i\\
&+\label{f18}\tag{f18}
\sum\limits_{\footnotesize\begin{subarray}{c}p+q+i=n+1\\ p,i>0\\q\geq0\end{subarray}}({\Id}_C\t\d_p \c \psi_q)\c \d_i.
\end{align}
\indent By  (\ref{def3}) and (\ref{def1.1}), for $m\leq n$, we get
\begin{align*}
\sum_{k+l=m \atop k,l\geq 0}({\Id}_C \otimes \Delta_k) \c\Delta_l = \sum_{k+l=m \atop k,l\geq 0} (\Delta_k \otimes {\Id}_C) \c\Delta_l
\end{align*}
That is,
\begin{align}\label{eq-f7'}
({\Id}_C \otimes \Delta) \c\Delta_m=\sum_{k+l=m \atop k,l\geq 0}(\Delta_k \otimes {\Id}_C) \c\Delta_l-\sum_{k+l=m \atop k>0,l\geq 0}({\Id}_C \otimes \Delta_k) \c\Delta_l.
\end{align}
So
\begin{align}
(\ref{f7})=&\sum_{i+j=n+1 \atop i,j>0}( {\Id}_C \t\d )\c\d_i \c \psi_j \notag\\
\stackrel{(\ref{eq-f7'})}{=}&\sum\limits_{\footnotesize\begin{subarray}{c}p+q+j=n+1\\ p,q\geq0,j>0\\p+q>0\end{subarray}}(\d_p \t {\Id}_C )\c \d_q \c \psi_j-\sum\limits_{\footnotesize\begin{subarray}{c}p+q+j=n+1\\ p,j>0,q\geq0\end{subarray}}( {\Id}_C \t \d_p )\c \d_q \c \psi_j \notag\\
=&\label{f19}\tag{f19}
\sum\limits_{\footnotesize\begin{subarray}{c}q+j=n+1\\ q,j>0\end{subarray}}(\d \t {\Id}_C )\c \d_q \c \psi_j \\
&+\label{f20}\tag{f20}
\sum\limits_{\footnotesize\begin{subarray}{c}p+q+j=n+1\\ p,j>0,q\geq0\end{subarray}}(\d_p \t {\Id}_C )\c \d_q \c \psi_j \\
&-\label{f21}\tag{f21}
\sum\limits_{\footnotesize\begin{subarray}{c}p+q+j=n+1\\ p,j>0,q\geq0\end{subarray}}( {\Id}_C \t \d_p )\c \d_q \c \psi_j.
\end{align}
Then we have
\begin{align}
&(\ref{f20})\notag \\
=&\tag{f22}\label{f22}
\sum\limits_{\footnotesize\begin{subarray}{c}p+q+j=n+1\\ p>0,~j,q\geq0\end{subarray}}(\d_p \t {\Id}_C )\c \d_q \c \psi_j \\
&-\tag{f23}\label{f23}
\sum\limits_{\footnotesize\begin{subarray}{c}p+q=n+1\\ p,q>0\end{subarray}}(\d_p \t {\Id}_C )\c \d_q \c \psi_C.
\end{align}
\begin{align}
&(\ref{f21})\notag \\
=&-\tag{f24}\label{f24}
\sum\limits_{\footnotesize\begin{subarray}{c}p+q+j=n+1\\ p>0,~j,q\geq0\end{subarray}}( {\Id}_C \t \d_p )\c \d_q \c \psi_j \\
&+\tag{f25}\label{f25}
\sum\limits_{\footnotesize\begin{subarray}{c}p+q=n+1\\ p,q>0\end{subarray}}( {\Id}_C \t \d_p )\c \d_q \c \psi_C.
\end{align}
By (\ref{def4}) and (\ref{def2.1}), for $m\leq n$, it follows that
\begin{align}\label{eq-f22'}
\sum_{k+l=m \atop k,l\geq0}\Delta_k \c \psi_l=\sum_{k+l=m \atop k,l\geq0}\big((\psi_l\otimes {\Id}_C)\circ \Delta_k + ({\Id}_C\otimes \psi_l)\circ \Delta_k\big).
\end{align}
\begin{align}
(\ref{f22})=&\sum_{p=1}^{n}\sum_{q+j=n+1-p \atop q,j\geq0}(\d_p \t {\Id}_C )\c \d_q \c \psi_j \notag \\
\stackrel{(\ref{eq-f22'})}{=}&\sum_{p=1}^{n}\sum_{q+j=n+1-p \atop q,j\geq0}(\d_p \t {\Id}_C )\c\big((\psi_j\otimes {\Id}_C)\circ \Delta_q + ({\Id}_C\otimes \psi_j)\circ \Delta_q \big) \notag \\
=&\tag{f26}\label{f26}
\sum_{p+j=n+1 \atop p,j>0}(\d_p \t {\Id}_C )\c\big((\psi_j\otimes {\Id}_C)\circ \Delta + ({\Id}_C\otimes \psi_j)\circ \Delta \big)\\
&+\tag{f27}\label{f27}
\sum_{p+q+j=n+1 \atop p,q>0,~j\geq0}(\d_p \t {\Id}_C )\c\big((\psi_j\otimes {\Id}_C)\circ \Delta_q + ({\Id}_C\otimes \psi_j)\circ \Delta_q \big)
\end{align}
and
\begin{align}
(\ref{f24})=&-\sum_{p=1}^{n}\sum_{q+j=n+1-p \atop q,j\geq0}( {\Id}_A \t \d_p )\c \d_q \c \psi_j \notag \\
\stackrel{(\ref{eq-f22'})}{=}&-\sum_{p=1}^{n}\sum_{q+j=n+1-p \atop q,j\geq0}( {\Id}_C \t \d_p )\c\big((\psi_j\otimes {\Id}_C)\circ \Delta_q + ({\Id}_C\otimes \psi_j)\circ \Delta_q \big) \notag \\
=-&\tag{f28}\label{f28}
\sum_{p+j=n+1 \atop p,j>0}( {\Id}_C \t \d_p )\c\big((\psi_j\otimes {\Id}_C)\circ \Delta + ({\Id}_C\otimes \psi_j)\circ \Delta \big)\\
-&\tag{f29}\label{f29}
\sum_{p+q+j=n+1 \atop p,q>0,~j\geq0}( {\Id}_C \t \d_p )\c\big((\psi_j\otimes {\Id}_C)\circ \Delta_q + ({\Id}_C\otimes \psi_j)\circ \Delta_q \big).
\end{align}
Now observe that each of the following equations:
\begin{align}
(\ref{f14})=&
\sum\limits_{\footnotesize\begin{subarray}{c}p+q+i=n+1\\ p\geq0,~q,i>0\end{subarray}}\Big[\big((\psi_q\otimes {\Id}_C)\circ \Delta_p + ({\Id}_C\otimes \psi_q)\circ \Delta_p \big)\t {\Id}_C\Big]\c \d_i \notag\\
=&\label{f141}\tag{f14$'$}
\sum\limits_{\footnotesize\begin{subarray}{c}p+q+i=n+1\\ p\geq0,~q,i>0\end{subarray}}(\psi_q \t {\Id}_C \t {\Id}_C)\c(\d_p \t {\Id}_C)\c \d_i \\
+&\label{f1411}\tag{f14$''$}
\sum\limits_{\footnotesize\begin{subarray}{c}p+q+i=n+1\\ p\geq0,~q,i>0\end{subarray}}( {\Id}_C \t \psi_q \t {\Id}_C)\c(\d_p \t {\Id}_C)\c \d_i.
\end{align}
\begin{align}
(\ref{f17})=&
-\sum\limits_{\footnotesize\begin{subarray}{c}p+q+i=n+1\\ p\geq0~i,q>0\end{subarray}}\Big[{\Id}_C \t\big((\psi_q\otimes {\Id}_C)\circ \Delta_p + ({\Id}_C\otimes \psi_q)\circ \Delta_p \big)\Big]\c \d_i  \notag\\
=&\label{f171}\tag{f17$'$}-\sum\limits_{\footnotesize\begin{subarray}{c}p+q+i=n+1\\ p\geq0,~q,i>0\end{subarray}}( {\Id}_C \t \psi_q \t {\Id}_C)\c( {\Id}_C \t \d_p)\c \d_i \\
&-\label{f1711}\tag{f17$''$}\sum\limits_{\footnotesize\begin{subarray}{c}p+q+i=n+1\\ p\geq0,~q,i>0\end{subarray}}( {\Id}_C \t  {\Id}_C \t \psi_q)\c( {\Id}_C \t \d_p)\c \d_i.
\end{align}
\begin{align}
(\ref{f26})=&\sum_{p+j=n+1 \atop p,j>0}(\d_p \t {\Id}_C )\c\big((\psi_j\otimes {\Id}_C)\circ \Delta + ({\Id}_C\otimes \psi_j)\circ \Delta \big) \notag\\
=&\label{f261}\tag{f26$'$}
\sum_{p+j=n+1 \atop p,j>0}(\d_p \c \psi_j \t {\Id}_C )\circ \Delta \\
&+\label{f2611}\tag{f26$''$}
\sum_{p+j=n+1 \atop p,j>0}({\Id}_C\otimes {\Id}_C\otimes \psi_j)\c(\d_p \t {\Id}_C) \c \d.
\end{align}
\begin{align}
&(\ref{f27})\notag\\
=&\sum_{p+q+j=n+1 \atop p,q,>0,~j\geq0}(\d_p \c \psi_j \t {\Id}_C)\c \d_q+\underline{\sum_{p+q+j=n+1 \atop p,q>0,~j\geq0}({\Id}_C \t {\Id}_C \t \psi_j )\c (\d_p\otimes {\Id}_C)\circ \Delta_q} \notag\\
=&\label{f271}\tag{f27$'$}
\underline{\sum_{p+q=n+1 \atop p,q>0}({\Id}_C \t {\Id}_C \t \psi_C )\c (\d_p\otimes {\Id}_C)\circ \Delta_q} \\
&+\label{f2711}\tag{f27$''$}
\underline{\sum_{p+q+j=n+1 \atop p,q,j>0}({\Id}_C \t {\Id}_C \t \psi_j )\c (\d_p\otimes {\Id}_C)\circ \Delta_q }\\
&+\label{f27111}\tag{f27$'''$}
\sum_{p+q+j=n+1 \atop p,q>0,~j\geq0}(\d_p \c \psi_j \t {\Id}_C)\c \d_q.
\end{align}
\begin{align}
&(\ref{f28})\notag\\
=&-\label{f281}\tag{f28$'$}
\sum_{p+j=n+1 \atop p,j>0}(\psi_j \t{\Id}_C \t {\Id}_C)\c({\Id}_C \t \d_p)\c \d \\
&-\label{f2811}\tag{f28$''$}
\sum_{p+j=n+1 \atop p,j>0}({\Id}_C \t \d_p \c \psi_j)\c \d.
\end{align}
\begin{align}
&(\ref{f29})\notag\\
=&-\sum_{p+q+j=n+1 \atop p,q>0,~j\geq0}(\psi_j \t{\Id}_C \t {\Id}_C)\c ({\Id}_C \t \d_p)\c \d_q -\sum_{p+q+j=n+1 \atop p,q>0,~j\geq0}
({\Id}_C \t \d_p\c \psi_j)\c \d_q \notag \\
=&\label{f291}\tag{f29$'$}
-\sum_{p+q=n+1 \atop p,q>0}(\psi_C \t{\Id}_C \t {\Id}_C)\c ({\Id}_C \t \d_p)\c \d_q \\
&-\label{f2911}\tag{f29$''$}
\sum_{p+q+j=n+1 \atop p,q,j>0}(\psi_j \t{\Id}_C \t {\Id}_C)\c ({\Id}_C \t \d_p)\c \d_q \\
&-\label{f29111}\tag{f29$'''$}
\sum_{p+q+j=n+1 \atop p,q>0,~j\geq0}
({\Id}_C \t \d_p\c \psi_j)\c \d_q.
\end{align}
We observe that each of the following sums is equal to 0: (\ref{e1}) + (\ref{f23}) = 0, (\ref{e2}) + (\ref{f25}) = 0, (\ref{f4}) + (\ref{f19}) = 0, (\ref{f18}) + (\ref{f29111}) = 0, (\ref{f1}) + (\ref{f2811}) = 0, (\ref{f10}) + (\ref{f261}) = 0, (\ref{f15}) + (\ref{f27111}) = 0.

\hspace{5em} (\ref{f13}) + (\ref{f16}) + (\ref{f291}) + (\ref{f271}) 
\begin{align*}
=&\sum_{p+i=n+1 \atop p,i>0}(\psi_C\t {\Id}_C \t {\Id}_C)\c(\d_p \t {\Id}_C)\c \d_i \\
&+\sum_{p+i=n+1 \atop p,i>0}( {\Id}_C \t \psi_C\t {\Id}_C)\c(\d_p \t {\Id}_C)\c \d_i \\
&-\sum_{p+i=n+1 \atop p,i>0}( {\Id}_C \t \psi_C\t {\Id}_C)\c({\Id}_C \t \d_p)\c \d_i \\
&-\sum_{p+i=n+1 \atop p,i>0}( {\Id}_C \t{\Id}_C \t \psi_C)\c({\Id}_C \t \d_p)\c \d_i \\
&-\sum_{p+q=n+1 \atop p,q>0}(\psi_C \t{\Id}_C \t {\Id}_C)\c ({\Id}_C \t \d_p)\c \d_q \\
&+\sum_{p+q=n+1 \atop p,q>0}({\Id}_C \t {\Id}_C \t \psi_C )\c ((\d_p\otimes {\Id}_C)\circ \Delta_q
=-(\ref{e3}),
\end{align*}
i.e., (\ref{f13}) + (\ref{f16}) + (\ref{f291}) + (\ref{f271}) + (\ref{e3}) = 0.\\
Using (\ref{def3}) and (\ref{def1.1}) again, for $m\leq n$, we have
\begin{align}\label{eq-f2sum}
\sum_{k+l=m \atop k,l\geq 0}({\Id}_C \otimes \Delta_k) \c\Delta_l = \sum_{k+l=m \atop k,l\geq 0} (\Delta_k \otimes {\Id}_C) \c\Delta_l
\end{align}
So
\hspace{4em}(\ref{f2}) + (\ref{f1711}) + (\ref{f5}) + (\ref{f2611}) + (\ref{f2711})
\begin{align*}
=&-\sum_{q+p=n+1 \atop q,p>0}({\Id}_C \t {\Id}_C\t \psi_q)\c({\Id}_C \t \d_p)\c \d \\
&-\sum\limits_{\footnotesize\begin{subarray}{c}p+q+i=n+1\\ p\geq0,~q,i>0\end{subarray}}( {\Id}_C \t  {\Id}_C \t \psi_q)\c( {\Id}_C \t \d_p)\c \d_i \\
&+\sum_{i+q=n+1 \atop i,q>0}({\Id}_C\t {\Id}_C \t \psi_q )\c(\d \t {\Id}_C)\c \d_i \\
&+\sum_{p+q=n+1 \atop p,q>0}({\Id}_C\otimes {\Id}_C\otimes \psi_q)\c(\d_p \t {\Id}_C) \c \d \\
&+\sum_{p+q+i=n+1 \atop p,i,q>0}({\Id}_C\t {\Id}_C \t \psi_q )\c (\d_p \t {\Id}_C)\c\d_i \\
=&-\sum_{p+q+i=n+1 \atop i,p\geq0,q>0}({\Id}_C \t {\Id}_C \t \psi_q)\c ({\Id}_C \t \d_p)\c \d_i \\
&+\sum_{p+q+i=n+1 \atop i,p\geq0,q>0}({\Id}_C \t {\Id}_C \t \psi_q)\c ( \d_p \t {\Id}_C )\c \d_i \\
=&\sum_{q=1}^{n}\sum_{p+i=n+1-q \atop i,p\geq0}({\Id}_C \t {\Id}_C \t \psi_q)\c\big(( \d_p \t {\Id}_C )\c \d_i -  ({\Id}_C \t \d_p)\c \d_i  \big)\\
\stackrel{(\ref{eq-f2sum})}{=}&0.
\end{align*}
\hspace{4em}(\ref{f3}) + (\ref{f171}) + (\ref{f11}) + (\ref{f1411})
\begin{align*}
=&-
\sum_{q+p=n+1 \atop q,p>0}({\Id}_C \t \psi_q \t{\Id}_C )\c({\Id}_C \t \d_p)\c \d \\
&-\sum\limits_{\footnotesize\begin{subarray}{c}p+q+i=n+1\\ p\geq0,~q,i>0\end{subarray}}( {\Id}_C \t \psi_q \t {\Id}_C)\c( {\Id}_C \t \d_p)\c \d_i\\
&+
\sum_{q+p=n+1 \atop q,p>0}({\Id}_C \t \psi_q \t {\Id}_C)\c(\d_p \t {\Id}_C )\c \d \\
&+
\sum\limits_{\footnotesize\begin{subarray}{c}p+q+i=n+1\\ p\geq0,~q,i>0\end{subarray}}( {\Id}_C \t \psi_q \t {\Id}_C)\c(\d_p \t {\Id}_C)\c \d_i\\
=&-\sum_{p+q+i=n+1 \atop i,p\geq0,q>0}({\Id}_C \t \psi_q \t{\Id}_C )\c ({\Id}_C \t \d_p)\c \d_i \\
&+\sum_{p+q+i=n+1 \atop i,p\geq0,q>0}({\Id}_C \t \psi_q \t{\Id}_C )\c ( \d_p \t {\Id}_C )\c \d_i \\
=&\sum_{q=1}^{n}\sum_{p+i=n+1-q \atop i,p\geq0}({\Id}_C \t \psi_q \t{\Id}_C )\c\big(( \d_p \t {\Id}_C )\c \d_i -  ({\Id}_C \t \d_p)\c \d_i  \big)\\
\stackrel{(\ref{eq-f2sum})}{=}&0.
\end{align*}
\hspace{4em}(\ref{f9}) + (\ref{f281}) + (\ref{f2911}) + (\ref{f12}) + (\ref{f141})
\begin{align*}
=&-
\sum_{i+q=n+1 \atop i,q>0}(\psi_q \t {\Id}_C \t {\Id}_C)\c ( {\Id}_C \t \d)\c\d_i \\
&-
\sum_{p+q=n+1 \atop p,q>0}(\psi_q \t{\Id}_C\t {\Id}_C)\c({\Id}_C \t \d_p)\c \d \\
&-
\sum_{p+q+i=n+1 \atop p,q,i>0}(\psi_q \t{\Id}_C \t {\Id}_C)\c ({\Id}_C \t \d_p)\c \d_i \\
&+
\sum_{p+q=n+1 \atop p,q>0}(\psi_q \t {\Id}_C \t {\Id}_C)\c(\d_p \t {\Id}_C )\c \d \\
&+
\sum\limits_{\footnotesize\begin{subarray}{c}p+q+i=n+1\\ p\geq0,~q,i>0\end{subarray}}(\psi_q \t {\Id}_C \t {\Id}_C)\c(\d_p \t {\Id}_C)\c \d_i \\
=&-\sum_{p+q+i=n+1 \atop i,p\geq0,q>0}( \psi_q \t{\Id}_C \t{\Id}_C)\c ({\Id}_C \t \d_p)\c \d_i \\
&+\sum_{p+q+i=n+1 \atop i,p\geq0,q>0}( \psi_q \t{\Id}_C \t{\Id}_C )\c ( \d_p \t {\Id}_C )\c \d_i \\
=&\sum_{q=1}^{n}\sum_{p+i=n+1-q \atop i,p\geq0}( \psi_q \t{\Id}_C \t{\Id}_C )\c\big(( \d_p \t {\Id}_C )\c \d_i -  ({{\Id}}_C \t \d_p)\c \d_i  \big)\\
\stackrel{(\ref{eq-f2sum})}{=}&0.
\end{align*}

So we have $\delta_c  ({\Ob}_{\psi}^{2})-\omega ({\Ob}_{C}^{3})=0$, which implies that ${\Ob}_\Omega$ is a $3$-cocycle.

The proof of Proposition \ref{ext2} is finished.

\hspace*{1cm}\\

\noindent{\bf Acknowledge} The authors would like to thank the anonymous referee for his/her helpful comments. In particular, the interesting question in Remark \ref{rmk Ore}  were raised by him/her.


\begin{thebibliography}{00}
\bibitem{lie} A. Nijenhuis, R. Richardson. Cohomology and Deformations in Graded Lie
Algebras, {\it Bull. Amer. Math. Soc.} {\bf 72} (1966) 1-29.
\bibitem{BA} D. Balavoine. Deformations of algebras over a quadratic operad, {\it Contemp. Math.} {\bf 202} (1997) 207-234.
\bibitem{BK} B. Bakalov, V. G. Kac. Cohomology of conformal algebras. {\it Comm. Math. Phys.} {\bf 200(3)} (1999), 561-598.
\bibitem{BW} T. Brzezi$\acute{n}$ski, R. Wisbauer. Corings and Comodules [M]. Cambridge. Univ. Press, Cambridge, 2003.
\bibitem{CE} C. Chevalley, S. Eilenberg. Cohomology theory of Lie groups and Lie algebras. {\it Trans. Amer. Math. Soc.} {\bf 63} (1948), 85–124.
\bibitem{Das} A. Das, A. Mandal. Extensions, deformation and categorification of AssDer pairs.  arXiv:2002.11415v1.
\bibitem{DE} R. David E. Divided power structures on Hopf algebras and embedding Lie algebras into special-derivation algebras, {\it J. Algebra} {\bf 98(1)} (1986) 143-170.
\bibitem{DY1} Y. Doi.  Homological coalgebra, {\it J. Math. Soc. Japan.} {\bf 33} (1981) 31-50.
\bibitem{DY2} Y. Doi.  Deformations of coalgebra morphisms, {\it J. Algebra} {\bf 307} (2007) 106-115.
\bibitem{DNR}  S. D$\breve{a}$sc$\breve{a}$lescu, C. N$\breve{a}$st$\breve{a}$sescu, and S. Raianu, ``Hopf Algebras : An Introduction'', Monographs and Textbooks in Pure and Applied Mathematics, vol. 235, Marcel Dekker, Inc., New York, 2001.
\bibitem{GS0} M. Gerstenhaber. The cohomology structure of an associative ring. {\it Ann. of Math.} {\bf 78(2)} (1963), 267-288.
\bibitem{GS1} M. Gerstenhaber. On the deformation of rings and algebras, {\it Ann. of Math.} {\bf 79} (1964) 59-103.
\bibitem{GS} M. Gerstenhaber, S. D. Schack. On the deformation of algebra morphisms and diagrams. {\it Trans. Amer. Math. Soc.} {\bf 279(1)} (1983), 1-50.
\bibitem{GS4}  M. Gerstenhaber, S.D. Schack. Algebras, bialgebras, quantum groups, and algebraic deformations, {\it Contemp. Math.}  {\bf 134} (1992) 51-92.
\bibitem{GH} G. Hochschild. On the cohomology groups of an associative algebra. {\it Ann. of Math.} {\bf 46(2)} (1945), 58-67.
\bibitem{KK} P. S. Kolesnikov, R. A. Kozlov. On the Hochschild Cohomologies of Associative Conformal Algebras with a Finite Faithful Representation. {\it Comm. Math. Phys.} {\bf 369(1)} (2019), 351-370.
\bibitem{DJ}J. David W. Cohomology of coalgebras, Memoirs of the American Mathematical Society, No. 82 American Mathematical Society, Providence, R.I. (1968) pp 73.
\bibitem{NY} C. Negron, Y. Volkov and S. Witherspoon. $A_\infty$-coderivations and the Gerstenhaber bracket on Hochschild cohomology,
{\it J. Noncommut.Geom.} {\bf 14} (2020) 531-565.
\bibitem{SS} Siegel S. F., Witherspoon S. J. The Hochschild cohomology ring of a group algebra. {\it Proc. London Math. Soc.} {\bf 79(1)} (1999), 131-157.

\bibitem{ST} J. Stasheff. The intrinsic bracket on the deformation complex of an associative
algebra, {\it J. Pure Appl. Algebra} {\bf 89} (1993)  231-235.

\bibitem{SW}  M. E. Sweedler. ``Hopf Algebras'', Benjamin, New York, 1969.
\bibitem{Tang} R. Tang, Y. Fr{\rm$\acute{e}$}gier, Yunhe Sheng. Cohomologies of a Lie algebra with a derivation and
applications, {\it J. Algebra} {\bf 534} (2019) 65-99.
\bibitem{Wang} K. Wang, G. Zhou. The homotopy theory of Rota-Baxter associative algebras of arbitrary weights, arXiv: 2203.02960, 2022.
\end{thebibliography}
\end{document}